\pdfoutput=1
\documentclass[11pt,reqno,english]{amsart}
\usepackage{amssymb,amsmath,amsthm,verbatim}
\usepackage[margin=1in, footskip=0.3in]{geometry}
\usepackage[dvipsnames]{xcolor}
\usepackage{url}
\usepackage{mathrsfs}
\usepackage[all]{xy}
  \SelectTips{cm}{10}
  \everyxy={<2.5em,0em>:}
\usepackage{tikz}
\usepackage{picinpar} 

\usepackage{fancyhdr}
  \pagestyle{fancy}
  \fancyhead[R]{}
  \fancyhead[L]{}
  \fancyfoot[C]{\thepage}

\usepackage{ifpdf}
  \ifpdf
    \PassOptionsToPackage{hyphens}{url}\usepackage[colorlinks]{hyperref}
 \hypersetup{
  colorlinks,
  citecolor=Mulberry,
  linkcolor=Red,
  urlcolor=Green}
  \else
    
    \newcommand{\href}[2]{#2}
  \fi

\theoremstyle{plain}
  \newtheorem{lemma}[equation]{Lemma}
  \newtheorem{proposition}[equation]{Proposition}
  \newtheorem{theorem}[equation]{Theorem}
  
    \newtheorem{question}[equation]{Question}

\theoremstyle{definition}
  \newtheorem{definition}[equation]{Definition}

\renewcommand{\thesection}{\arabic{section}}
\renewcommand{\theequation}{\thesection.\arabic{equation}}

 \DeclareFontFamily{U}{manual}{}
 \DeclareFontShape{U}{manual}{m}{n}{ <->  manfnt }{}
 \newcommand{\manfntsymbol}[1]{%
    {\fontencoding{U}\fontfamily{manual}\selectfont\symbol{#1}}}

\makeatletter
   \@addtoreset{section}{part}
   \@addtoreset{equation}{section}
   \@addtoreset{footnote}{section}

    {\hspace*{\fill}$\lrcorner$\endgraf\endgroup\end{trivlist}}
 \newenvironment{example}[1][]{
   \refstepcounter{equation}
   \begin{proof}[Example~\theequation%
   \@ifnotempty{#1}{ (#1)}.]
   }
  {\end{proof}}
 \newenvironment{remark}[1][]{
   \refstepcounter{equation}
   \begin{proof}[Remark~\theequation%
   \@ifnotempty{#1}{ (#1)}.]
   }
  {\end{proof}}
\makeatother

  \DeclareFontFamily{OT1}{pzc}{}
  \DeclareFontShape{OT1}{pzc}{m}{it}{<-> s * [1.100] pzcmi7t}{}
  \DeclareMathAlphabet{\mathpzc}{OT1}{pzc}{m}{it}

\newif\ifhascomments \hascommentstrue
\ifhascomments
  \newcommand{\wei}[1]{{\color{purple}[[\ensuremath{\bigstar\bigstar\bigstar} #1]]}}
  \newcommand{\matt}[1]{{\color{red}[[\ensuremath{\spadesuit\spadesuit\spadesuit} #1]]}}
  \newcommand{\either}[1]{{\color{blue}[[\ensuremath{\spadesuit\spadesuit\spadesuit} #1]]}}
\else
  \newcommand{\wei}[1]{}
  \newcommand{\matt}[1]{}
  \newcommand{\either}[1]{}
\fi

\newcommand{\<}{\langle}
\renewcommand{\>}{\rangle} 

\newcommand{\A}{\mathcal A}

\DeclareMathOperator{\aut}{Aut}
\DeclareMathOperator{\Aut}{\ensuremath{\mathcal{A}\kern-.125em\mathpzc{ut}}}

\newcommand{\bbar}[1]{\setbox0=\hbox{$#1$}\dimen0=.2\ht0 \kern\dimen0 \overline{\kern-\dimen0 #1}}

\makeatletter
\newcommand{\extp}{\@ifnextchar^\@extp{\@extp^{\,}}}
\def\@extp^#1{\mathop{\bigwedge\nolimits^{\!#1}}}
\makeatother

\DeclareMathOperator{\End}{End}
\DeclareMathOperator{\Endo}{\ensuremath{\mathcal{E}\kern-.125em\mathpzc{nd}}}

\newcommand{\Fsep}{F^{\sep}}

\DeclareMathOperator{\GL}{GL}
\newcommand{\Gal}{\mathrm{Gal}}
\newcommand{\HH}{\mathcal H}

\DeclareMathOperator{\Hom}{\ensuremath{\mathcal{H}\kern-.125em\mathpzc{om}}}
\newcommand{\id}{\mathrm{id}}
\DeclareMathOperator{\Ind}{Ind}

\DeclareMathOperator{\Mat}{Mat}

\DeclareMathOperator{\No}{N}

\newcommand{\QQ}{\mathbb Q}
\newcommand{\RR}{\mathbb R}
\DeclareMathOperator{\Res}{Res}

\DeclareMathOperator{\sep}{sep}
\DeclareMathOperator{\SL}{SL}

\DeclareMathOperator{\Spur}{Spr}

\DeclareMathOperator{\Sym}{Sym}

\DeclareMathOperator{\sgn}{sgn}

\DeclareMathOperator{\Tr}{Tr}

\newcommand{\ZZ}{\mathbb{Z}}

 \def\ari[#1]{\ar@{^(->}[#1]}
 \def\are[#1]{\ar[#1]^{\txt{\'et}}}
 \def\areh[#1]{\ar[#1]|{\txt{$H$-eq}}^{\txt{\'et}}}
 \def\ars[#1]{\ar@{->>}[#1]}
 \newcommand{\dplus}{\ar@{}[d]|{\mbox{$\oplus$}}}
 \newcommand{\dtimes}{\ar@{}[d]|{\mbox{$\times$}}}

\newtheorem*{introtheorem}{Theorem}

\begin{document}

\title{Galois closures of non-commutative rings and an application to Hermitian representations}

\author{Wei Ho}
\thanks{The first author was supported by NSF grant DMS-1701437.}
\address{Department of Mathematics, University of Michigan, Ann Arbor, MI 48109.}
\email{weiho@umich.edu}

\author{Matthew Satriano}
\thanks{The second author was partially supported by 
an NSERC Discovery grant.}
\address{Pure Mathematics, University of Waterloo, 200 University Avenue West, Waterloo, Ontario, Canada N2L 3G1}
\email{msatrian@uwaterloo.ca}

\date{\today}

\begin{abstract}
Galois closures of commutative rank $n$ ring extensions were introduced by Bhargava and the second author. In this paper, we generalize the construction to the case of non-commutative rings. We show that non-commutative Galois closures commute with base change and satisfy a product formula. As an application, we give a uniform construction of many of the representations arising in arithmetic invariant theory, including many Vinberg representations.
\end{abstract}

\maketitle
\thispagestyle{fancy}
	
\tableofcontents

\section{Introduction}
In the last fifteen years, there have been many beautiful applications given by interpreting orbit spaces of representations as moduli spaces of arithmetic or algebraic objects, such as ideal classes of low rank rings or Selmer elements of elliptic curves. Many of the representations that arise seem to be closely related to one another, and in some cases, they can be formally related to one another by a process called Hermitianization; see \cite{hcl1,coregular,pollack}. In this paper, we construct such representations via a uniform approach. Our method relies on a seemingly unrelated problem: defining Galois closures of possibly non-commutative rings.

Galois closures of commutative rank $n$ ring extensions were studied in \cite{gc}, building on previous work of Grothendieck \cite[Expos\'e 4]{chevalley}, Katz--Mazur \cite[\S1.8.2]{katzmazur}, and Gabber \cite[\S5.2]{ferrand}. Given a morphism $R\to A$ of commutative rings realizing $A$ as a free $R$-module of rank $n$, the \emph{Galois closure} $G(A/R)$ is defined as the quotient $A^{\otimes n}/I_{A/R}$, where $I_{A/R}$ is an ideal generated by relations coming from characteristic polynomials. More precisely, given $a\in A$, consider the $R$-linear endomorphism of $A$ given by multiplication by $a$ and let $T^n + \sum_{j=1}^n (-1)^j s_{A,j}(x) T^{n-j}$ be its characteristic polynomial. Let $a^{(i)}\in A^{\otimes n}$ denote $1\otimes\cdots\otimes a\otimes\cdots1$ where $a$ is in the $i$-th tensor factor, and let $e_j$ denote the $j$-th elementary symmetric function. Then the ideal $I_{A/R}$ is generated by the relations
\[
e_j(a^{(1)},a^{(2)},\ldots,a^{(n)}) - s_{A,j}(a),
\]
as $a$ runs through all elements of $A$. Then $G(A/R)$ is an $R$-algebra equipped with a natural $S_n$-action, and the elements $a^{(1)},a^{(2)},\ldots,a^{(n)}$ behave as if they are ``Galois conjugates.'' One key property is that $G(A/R)$ commutes with base change on $R$. This construction has since been generalized by Gioia to so-called intermediate Galois closure \cite{gioia} as well as by Biesel to Galois closures associated to subgroups of $S_n$ \cite{biesel-thesis}.

We now describe the connection between Galois closures of non-commutative algebras and problems in arithmetic invariant theory. In this paper, we obtain many of the representations with arithmetic applications by the following uniform construction: let $A$ be a possibly non-commutative degree $n$ $R$-algebra and let $G(A/R)$ be its Galois closure, as we define in Section \ref{sec:Galclosure}, which comes with a natural $S_n$-action. 
For an $n$-dimensional  $m\times m\times\cdots\times m$ array with entries in $G(A/R)$, there are two natural $S_n$-actions: one on $G(A/R)$, and the other permuting the coordinates of the $n$-dimensional array. The subspace where these two actions coincide has a natural action of the matrix ring $\Mat_m(A) \otimes G(A/R)$; we refer to this as the associated {\em Hermitian representation} $\HH_{A,m}$. See Section \ref{sec:Hermitian} for lists of representations obtained in this manner that have arisen in arithmetic invariant theory. We hope that our uniform construction of these representations $\HH_{A,m}$, with just the input of a degree $n$ $R$-algebra $A$ and a positive integer $m$, will also give a systematic approach to studying the moduli problems related to the orbit spaces of the Hermitian representations.

For $n$ and $m$ sufficiently small, these Hermitian representations were studied explicitly in previous work \cite{hcl1,coregular,pollack}. Galois closures were not needed in these previous papers for two main reasons. First, when $m$ is small, the entries of the elements in the Hermitian representation may be defined over $A$ itself. And second, when $n$ is small, the Galois closure $G(A/R)$ is quite simple; for example, when $A$ is a quadratic algebra ($n = 2$), the Galois closure $G(A/R)$ is isomorphic to $A$ (see Proposition \ref{prop:quadratic}). Similarly, when $A$ is a decomposable cubic algebra, e.g., $A=R\times B$ for $B$ quadratic, then $G(A/R)\simeq B^{\oplus3}$ (see Proposition \ref{prop:k-B}), so $m \times m \times m$ arrays Hermitian with respect to $A = R \times B$ may be viewed as an $m$-tuple of $m \times m$ matrices Hermitian with respect to $B$ (see Example \ref{ex:mxmxm as m tuple of mxm}). Thus, the need for Galois closures in describing Hermitian representations does not arise until one considers $m \geq 3$ and indecomposable cubic algebras $A$, such as the matrix ring $\Mat_3(R)$.

The aforementioned Proposition \ref{prop:k-B} and Example \ref{ex:mxmxm as m tuple of mxm} are both specific cases of more general results as we now discuss. In Section \ref{sec:properties}, we prove the following product formula, which allows one to calculate the Galois closure of decomposable algebras in terms of the Galois closures of its components.

\begin{introtheorem}[Product formula] \label{thm:product-formula-intro}
For $1 \leq i \leq k$, let $A_i$ be a degree $n_i$ $R$-algebra. Then
\begin{equation*}
G(A_1 \times \cdots \times A_k/R) \simeq \left( G(A_1/R)\otimes\cdots\otimes G(A_k/R) \right)^N
\end{equation*}
where $N$ is the multinomial coefficient ${n\choose n_1,\ldots,n_k}$.
\end{introtheorem}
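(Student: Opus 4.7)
The plan is to exploit the central-idempotent decomposition of $A = A_1 \times \cdots \times A_k$. Write $e_i \in A$ for the idempotent projecting to $A_i$, and for each function $f \colon [n] \to [k]$ set $e_f = \prod_{j=1}^{n} e_{f(j)}^{(j)} \in A^{\otimes n}$. These form a complete set of orthogonal central idempotents, giving
\[
A^{\otimes n} \;=\; \bigoplus_{f \colon [n]\to[k]} A_{f(1)} \otimes_R \cdots \otimes_R A_{f(n)}.
\]
Call $f$ \emph{balanced} if $(|f^{-1}(1)|, \ldots, |f^{-1}(k)|) = (n_1, \ldots, n_k)$; there are exactly $N = \binom{n}{n_1,\ldots,n_k}$ balanced functions. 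The theorem will follow if I establish (i) $e_f = 0$ in $G(A/R)$ whenever $f$ is unbalanced, and (ii) for each balanced $f$, the summand $e_f G(A/R)$ is isomorphic to $G(A_1/R) \otimes_R \cdots \otimes_R G(A_k/R)$.

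For step (i), I would pass to the auxiliary base $R' = R[c_1, \ldots, c_k]$, treating the $c_i$ as indeterminates; by the base-change property proved earlier, $G(A \otimes_R R'/R') = G(A/R) \otimes_R R'$. Applied to $a = \sum_i c_i e_i$, whose left-multiplication $L_a$ is block diagonal with blocks $c_i \cdot 1_{A_i}$, the defining characteristic-polynomial relation reads $\prod_j (T - a^{(j)}) = \prod_i (T - c_i)^{n_i}$ in $G(A/R)[c_1,\ldots,c_k][T]$. Multiplying by $e_f$ and using $a^{(j)} e_f = c_{f(j)} e_f$ collapses this to
\[
\Bigl[\prod_{i=1}^{k}(T - c_i)^{a(f,i)} \,-\, \prod_{i=1}^{k}(T - c_i)^{n_i}\Bigr]\, e_f \;=\; 0,
\]
where $a(f,i) = |f^{-1}(i)|$. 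If $f$ is unbalanced, relabel so that $a(f,1) > n_1$; the monomial $c_1^{a(f,1)} T^{n - a(f,1)}$ then has coefficient $(-1)^{a(f,1)}$ in the first product and $0$ in the second (whose degree in $c_1$ is only $n_1$). Reading off this monomial gives $\pm e_f = 0$, and since $\pm 1$ is a unit in $\mathbb{Z}$, one concludes $e_f = 0$ in $G(A/R)$ without any characteristic hypothesis on $R$.

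For step (ii), the $S_n$-action on $G(A/R)$ permutes the balanced summands transitively, so I may fix the standard $f_0$ sending the first $n_1$ indices to $1$, the next $n_2$ to $2$, and so on; then $e_{f_0}(A^{\otimes n}) = A_1^{\otimes n_1} \otimes_R \cdots \otimes_R A_k^{\otimes n_k}$. For each $i$ and $a_i \in A_i$, specializing the universal relations to the element $(0,\ldots,a_i,\ldots,0) \in A$ and multiplying by $e_{f_0}$ causes the zero components to vanish, and the surviving identities are precisely the defining relations of $G(A_i/R)$ applied to $a_i$ inside the tensor factor $A_i^{\otimes n_i}$. Thus $e_{f_0} G(A/R)$ receives a surjection from $\bigotimes_i G(A_i/R)$. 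Conversely, for a general $a = (a_1,\ldots,a_k)$ one has $\det(T - L_a) = \prod_i p_{a_i}(T)$ by block diagonality, and within the $f_0$-summand the factors $(T - a^{(j)})$ for $j \in f_0^{-1}(i)$ lie in distinct slots of $A_i^{\otimes n_i}$ and commute with those for $i' \neq i$, so
\[
\prod_{j=1}^{n}(T - a^{(j)}) \;=\; \prod_{i=1}^{k}\ \prod_{j \in f_0^{-1}(i)} (T - a_i^{(j)}) \;\equiv\; \prod_{i=1}^{k} p_{a_i}(T)
\]
modulo the single-factor relations. Hence the general relations follow from the single-factor ones, so the surjection from $\bigotimes_i G(A_i/R)$ onto $e_{f_0} G(A/R)$ is actually an isomorphism.

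The delicate step I anticipate is the converse direction of (ii): one must check that the multiplicativity $\det(T - L_a) = \prod_i p_{a_i}(T)$ exactly matches the clean factorization of $\prod_j (T - a^{(j)})$ in the $f_0$-summand, so that no further collapse occurs beyond $\bigotimes_i G(A_i/R)$. This works because the noncommutativity of the $A_i$'s is confined to within each tensor block $A_i^{\otimes n_i}$, while distinct blocks commute, so standard commutative polynomial algebra suffices to equate coefficients. Step (i) is the other potential obstruction — especially over rings of positive characteristic — but the universal-coefficient trick sidesteps this uniformly by extracting a single monomial whose coefficient is $\pm 1$.
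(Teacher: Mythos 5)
Your argument is correct, and it shares the paper's overall skeleton — decompose $A^{\otimes n}$ along the central idempotents indexed by functions $f\colon[n]\to[k]$, kill the unbalanced summands, and identify each of the $N=\binom{n}{n_1,\ldots,n_k}$ balanced summands with $G(A_1/R)\otimes\cdots\otimes G(A_k/R)$ — but both key steps are carried out by genuinely different arguments. For the vanishing step, the paper needs no base change and no indeterminates: choosing $j$ occurring \emph{fewer} than $n_j$ times in $f$, it uses $s_{A,n_j}(\varepsilon_j)=s_{A_j,n_j}(1)=1$ (Lemma \ref{l:sm for products}) to get $1-e_{n_j}(\varepsilon_j^{(1)},\ldots,\varepsilon_j^{(n)})\in I_{A/R}$, and every monomial of that symmetric function dies upon multiplication by $e_f$, so $e_f\in I_{A/R}$ at once; your generic element $a=\sum_i c_ie_i$ over $R[c_1,\ldots,c_k]$, with the coefficient extraction of $c_1^{a(f,1)}T^{n-a(f,1)}$ equal to $\pm1$, is heavier machinery (it invokes Theorem \ref{thm:base change}, which is legitimately available at that point) but equally characteristic-free and correct. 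For the balanced summands, the paper first shrinks the generating set of $I_{A/R}$ to the relations attached to single-block elements $a_j\varepsilon_j$ (this is really an application of Proposition \ref{prop:I is generated by basis expressions} to a basis of $A$ made of such elements; the paper's citation of the base-change theorem there is a slip), after which multiplying by $\varepsilon_{\underline{i}}$ immediately yields the blockwise relations; you instead keep the relations for arbitrary $a=(a_1,\ldots,a_k)$ and reduce them to the blockwise ones via the factorization $\prod_j(T-a^{(j)})e_{f_0}=\prod_i\prod_{j\in f_0^{-1}(i)}(T-a_i^{(j)})e_{f_0}$ together with a telescoping congruence modulo the blockwise left ideal — valid precisely because, as you note, elements supported in distinct blocks commute, so right-multiplying a block-$i$ relation by factors from other blocks stays inside the left ideal. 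Your route trades the reduction-of-generators lemma for this (correctly identified) commutation check; the paper's route is shorter once that lemma is in hand, while yours is somewhat more self-contained beyond its use of base change. Either way the count of balanced summands gives the multiplicity $N$, as required.
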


As a consequence, in Theorem \ref{thm:prod-form-Herm}, we may write the Hermitian representation of a decomposable algebra in terms of the Hermitian representations of its components.

\begin{introtheorem}[Product formula for Hermitianizations] \label{thm:product-formula-herm-intro}
For $1\leq i\leq k$, let $A_i$ be a degree $n_i$ $R$-algebra, and let $A = \prod_{i=1}^k A_i$. Then
for any positive integer $m$, we have
$\HH_{A,m}\simeq\HH_{A_1,m}\otimes\cdots\otimes\HH_{A_k,m}$.
\end{introtheorem}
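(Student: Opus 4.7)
The plan is to reinterpret $\HH_{A,m}$ as a space of $S_n$-invariants and then apply the product formula for Galois closures (Theorem~\ref{thm:product-formula-intro}) together with Frobenius reciprocity. Setting $V=R^m$, I would identify an $n$-dimensional $m\times\cdots\times m$ array with entries in $G(A/R)$ with an element of $G(A/R)\otimes_R V^{\otimes n}$; the condition that the $S_n$-action on $G(A/R)$ coincide with the permutation action on the tensor factors of $V^{\otimes n}$ is precisely the condition of being fixed under the diagonal action. Thus $\HH_{A,m}=(G(A/R)\otimes_R V^{\otimes n})^{S_n}$, and similarly $\HH_{A_i,m}=(G(A_i/R)\otimes_R V^{\otimes n_i})^{S_{n_i}}$ for each $i$.

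Next, I would upgrade Theorem~\ref{thm:product-formula-intro} from an $R$-module isomorphism to an $R[S_n]$-module isomorphism. The natural indexing set for the $N$ direct summands is the set of ordered partitions of $\{1,\ldots,n\}$ into blocks of sizes $n_1,\ldots,n_k$, and $S_n$ should permute these summands in the evident way. Letting $H=S_{n_1}\times\cdots\times S_{n_k}\subset S_n$ denote the stabilizer of the standard partition, the statement I would aim for is
\[
G(A/R)\simeq \Ind_H^{S_n}\bigl(G(A_1/R)\otimes\cdots\otimes G(A_k/R)\bigr).
\]
This refinement should fall out of the proof of Theorem~\ref{thm:product-formula-intro}: to push $A^{\otimes n}$ down to $G(A_1/R)\otimes\cdots\otimes G(A_k/R)$ compatibly with the characteristic polynomial relations of $A=\prod A_i$, one must specify which of the $n$ tensor slots of $A^{\otimes n}$ lands in each $A_j$, and the set of such choices is exactly the $S_n$-set $S_n/H$ of cardinality $N$.

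Granting this refinement, the theorem becomes formal. Since $V^{\otimes n}$ restricts along $H\hookrightarrow S_n$ to $V^{\otimes n_1}\otimes\cdots\otimes V^{\otimes n_k}$, combining the projection formula $\Ind_H^{S_n}(X)\otimes_R Y\simeq \Ind_H^{S_n}(X\otimes_R\Res_H Y)$ with the identity $(\Ind_H^{S_n} Z)^{S_n}=Z^H$ yields
\begin{align*}
\HH_{A,m} &= \bigl(G(A/R)\otimes_R V^{\otimes n}\bigr)^{S_n}\\
&\simeq \bigl((G(A_1/R)\otimes\cdots\otimes G(A_k/R))\otimes V^{\otimes n_1}\otimes\cdots\otimes V^{\otimes n_k}\bigr)^H\\
&\simeq \bigotimes_{i=1}^k\bigl(G(A_i/R)\otimes_R V^{\otimes n_i}\bigr)^{S_{n_i}}=\bigotimes_{i=1}^k\HH_{A_i,m},
\end{align*}
where the last isomorphism uses that $H$ acts through the $i$-th factor on the $i$-th tensorand.

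The main obstacle is the induced-representation form of Theorem~\ref{thm:product-formula-intro}; once that is in hand, the proof reduces to standard manipulations with induced modules and Frobenius reciprocity, and the compatibility with the $\Mat_m(A)\otimes G(A/R)$-action should be immediate from tracing through the construction.
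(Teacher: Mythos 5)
Your proposal is correct and follows essentially the same route as the paper: identify $\HH_{A,U}$ with the $S_n$-invariants of $G(A/R)\otimes U^{\otimes n}$, use the induced-module form of the product formula, and conclude via the projection formula together with $(\Ind_H^{S_n}Z)^{S_n}\simeq Z^{H}$. The ``refinement'' you flag as the main obstacle is not actually an obstacle, since the body version of the product formula (Theorem~\ref{thm:product-formula}) is already stated and proved as an isomorphism of left $(A^{\otimes n}*S_n)$-modules, with the $S_n$-structure on the right-hand side being exactly $\Ind_{S_{n_1}\times\cdots\times S_{n_k}}^{S_n}\bigl(G(A_1/R)\otimes\cdots\otimes G(A_k/R)\bigr)$.
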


We also prove that taking Galois closures commutes with base change.

\begin{introtheorem}[Base change] \label{thm:base-change-intro}
Let $A$ be a degree $n$ $R$-algebra and let $S$ be a commutative $R$-algebra. The base change map yields an isomorphism
\[
G(A/R)\otimes_RS\simeq G((A\otimes_R S)/S).
\]
\end{introtheorem}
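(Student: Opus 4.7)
The plan is to produce a natural map $\phi\colon G(A/R)\otimes_R S\to G((A\otimes_R S)/S)$ and then check that it is an isomorphism. Under the canonical identification $A^{\otimes_R n}\otimes_R S\simeq (A\otimes_R S)^{\otimes_S n}$ of ambient tensor products, it suffices to show that $I_{A/R}\otimes_R S$ maps into $I_{A\otimes_R S/S}$. For $a\in A$, the multiplication-by-$a$ endomorphism of the $R$-module $A$ base changes to the multiplication-by-$(a\otimes 1)$ endomorphism of the $S$-module $A\otimes_R S$, so the characteristic polynomial base changes and $s_{A\otimes_R S,j}(a\otimes 1)=s_{A,j}(a)\otimes 1$. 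Thus the generator $e_j(a^{(1)},\ldots,a^{(n)})-s_{A,j}(a)$ of $I_{A/R}$ is identified with the generator $e_j((a\otimes 1)^{(1)},\ldots,(a\otimes 1)^{(n)})-s_{A\otimes_R S,j}(a\otimes 1)$ of $I_{A\otimes_R S/S}$, yielding $\phi$. Surjectivity is immediate, since every pure tensor $(a_1\otimes s_1)\otimes_S\cdots\otimes_S(a_n\otimes s_n)$ is hit by $(a_1\otimes\cdots\otimes a_n)\otimes(s_1\cdots s_n)$ before taking quotients.

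The heart of the argument is injectivity, equivalently, that $I_{A\otimes_R S/S}$ is generated as an ideal by $\phi(I_{A/R}\otimes_R S)$. The difficulty is that $I_{A\otimes_R S/S}$ has generators $\rho_j(\alpha):=e_j(\alpha^{(1)},\ldots,\alpha^{(n)})-s_{A\otimes_R S,j}(\alpha)$ for every $\alpha\in A\otimes_R S$, while $\phi$ only delivers these relations directly for $\alpha=a\otimes 1$ with $a\in A$. To close the gap I would use that $\rho_j$ is a homogeneous polynomial law of degree $j$ in $\alpha$. Writing $\alpha=\sum_i a_i\otimes s_i$ and expanding $\rho_j(\alpha)$ polynomially in the $s_i$ identifies $\rho_j(\alpha)$ with an $S$-linear combination, with coefficients monomials in the $s_i$, of the polarized (multilinear) components of $\rho_j$ evaluated on subtuples of the $a_i\in A$. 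It remains to show that these polarized components already lie in $I_{A/R}$, after which $S$-linearity gives $\rho_j(\alpha)\in\phi(I_{A/R}\otimes_R S)$.

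The main obstacle is this polarization step. I would handle it universally by passing to the free case $S=R[x_1,\ldots,x_k]$ and $\alpha=\sum_i a_i\otimes x_i$: the element $\rho_j(\alpha)\in A^{\otimes_R n}[x_1,\ldots,x_k]$ specializes, under $x_i\mapsto\lambda_i\in R$, to $\rho_j(\sum_i\lambda_i a_i)\in I_{A/R}$. Extracting the coefficients of monomials in the $x_i$ — which is legitimate because $\rho_j$ is a polynomial law, equivalently corresponds to a map out of the divided-power module $\Gamma^j_R(A)$ whose formation commutes with base change — identifies the polarized components as $R$-linear combinations of the generators $\rho_j(a)$, placing them in $I_{A/R}$. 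This yields the required generation and hence injectivity of $\phi$.
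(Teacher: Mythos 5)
Your reduction is sound up to the point you yourself flag: the map $\phi$ is well defined, surjectivity is clear, and the theorem is equivalent to showing each generator $\rho_j(\alpha)$, $\alpha\in A\otimes_RS$, lies in the extension of $I_{A/R}$; moreover the coefficients $P_d$ in the expansion of $\rho_j(\sum_i a_i\otimes x_i)$ are well defined because $\rho_j$ is a homogeneous polynomial law. The gap is the final inference of the polarization step. Knowing that every specialization $x_i\mapsto\lambda_i\in R$ sends $\rho_j(\sum_i a_i\otimes x_i)$ into $I_{A/R}$ does not let you solve for the coefficients unless $R$ has enough interpolation points, and the appeal to $\Gamma^j_R(A)$ commuting with base change does not supply what you actually need, namely that the mixed divided-power monomials $a_1^{[d_1]}\cdots a_k^{[d_k]}$ lie in the $R$-span of the pure powers $c^{[j]}$ (so that $P_d=\tilde\rho_j(a_1^{[d_1]}\cdots a_k^{[d_k]})$ would be an $R$-combination of the $\rho_j(c)$). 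That span statement is false in general: already in $\Gamma^3_{\ZZ}(\ZZ^2)$ the element $x^{[2]}y$ is not an integral combination of the elements $(ax+by)^{[3]}$, since the functional sending $x^{[2]}y,\,xy^{[2]}\mapsto 1$ and $x^{[3]},y^{[3]}\mapsto 0$ takes the even value $a^2b+ab^2=ab(a+b)$ on every pure cube. So for $j\geq 3$ --- exactly the range where Galois closures matter, e.g.\ $A=\Mat_3(\ZZ)$ --- your claim that the polarized components are $R$-linear combinations of the generators $\rho_j(a)$ is unsupported, and the general principle it rests on fails; nothing in your argument uses any special feature of $\rho_j$ to rescue it.

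The statement you need (that the $P_d$ lie in the left ideal $I_{A/R}$) is true, but it is essentially the content of Proposition \ref{prop:I is generated by basis expressions}, and proving it requires more than linear polarization. The paper uses the noncommutative factorization identity of \cite[Lemma 11]{gc} to express $s_{A,k}(a+b)$ (and, via the split algebra $R^n$, the corresponding identity for $e_k$) in terms of $s_{A,k}(a)$, $s_{A,k}(b)$, and lower-index $s_{A,i}$ evaluated at noncommutative polynomials in $a,b$; by induction this places the relation at an arbitrary element inside the left ideal generated by the relations at basis elements --- crucially exploiting products and lower-degree relations with integral coefficients, not just the $R$-span of the degree-$j$ values. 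With that proposition, base change is a two-line argument: $I_{A/R}$ is generated by the relations at a basis $u_\ell$ and $I_{(A\otimes_RS)/S}$ by the relations at $u_\ell\otimes 1$, so the latter is the extension of the former. Two smaller points: your justification of $s_{A\otimes_RS,j}(a\otimes 1)=s_{A,j}(a)$ via the multiplication-by-$a$ endomorphism is the commutative definition of \cite{gc}, whereas here one must argue through $\iota\otimes 1$ as in Remark \ref{rmk:base changed sj of pure tensors}; and you also need $s_{A\otimes_RS,j}(\alpha)\in S$ for all $\alpha$ (Lemma \ref{l:base changing the degree n alg structure}) for $G((A\otimes_RS)/S)$ to make sense, which again rests on the same additivity identity rather than on polarization.
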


In this paper, all the algebras we consider are associative, but we believe it would be useful to generalize these ideas to non-associative algebras such as cubic Jordan algebras as well, especially as Jordan algebras have already been crucially used in basic examples of Hermitianization.

\vspace{0.7em}
\noindent{\bf Acknowledgments.} It is our pleasure to thank Jason Bell, Manjul Bhargava, Bhargav Bhatt, Owen Biesel, Jonah Blasiak, Andrei Caldararu, Aaron Pollack, and Paul Smith for helpful comments and discussions.

\section{Galois closures of non-commutative rings} \label{sec:Galclosure}
In this section, we define the {\em Galois closure} for certain classes of (possibly noncommutative) rings and discuss several properties. When $A$ is commutative, it recovers the construction in \cite{gc}.

\subsection{Degree $n$ algebras}
We first define a degree $n$ algebra over a commutative ring $R$.
We use the following notion: a morphism $f\colon M\to N$ of $R$-modules is \emph{universally injective} if for every commutative $R$-algebra $S$, the induced map $f\otimes1\colon M\otimes_R S\to N\otimes_R S$ is injective. For example, if $f$ is split, then $f$ is universally injective.

\begin{definition}
\label{def:degree-n-algebras}
Let $A$ be a central $R$-algebra that is free of finite rank as an $R$-module. Let $R'$ be a finitely generated commutative $R$-algebra such that $R \to R'$ is universally injective, with a universally injective $R$-algebra homomorphism $\iota\colon A\to\Mat_n(R')$. We say that the triple $(A,R',\iota)$ is a {\em degree $n$ $R$-algebra} if for all $a\in A$, the characteristic polynomial 
\begin{equation} \label{eq:monicpoly}
P_{A,a}(T) = \det(T-\iota(a)) = T^n - s_{A,1}(a) T^{n-1} + \cdots + (-1)^n s_{A,n}(a)
\end{equation}
lives in $R[T]$, in other words, $s_{A,i}(a) \in R$ for $1 \leq i \leq n$. We frequently suppress $R'$ and $\iota$ from the notation if they are unambiguous, and refer to $A$ itself as a degree $n$ $R$-algebra.
\end{definition}

For a degree $n$ $R$-algebra $(A,R', \iota)$, we refer to $\Tr(a) := s_{A,1}(a)$ and $\No(a) := s_{A,n}(a)$ respectively as the {\em trace} and {\em norm} of $a \in A$. It is immediate from the definition that $s_{A,j}(ra)=r^js_{A,j}(a)$ for all $r\in R$ and $a\in A$; in particular, the trace is additive and the norm is multiplicative.

\begin{remark}
If $R' = R$, clearly the required property $P_{A,a}(T) \in R[T]$ is automatically satisfied.
\end{remark}

\begin{remark}
Let $(A,R', \iota)$ be a degree $n$ $R$-algebra and let $\psi\colon\Mat_n(R')\to\Mat_n(R')$ be conjugation by an element of $\GL_n(R')$. Then $(A,R',\psi\iota)$ is a degree $n$ $R$-algebra and all the characteristic polynomials $P_{A,a}(T)$ for the two algebras coincide.
\end{remark}

\begin{example}[Left multiplication]
\label{ex:char-polyn-generic-polyn}
Let $A$ be a central $R$-algebra that is free of finite rank $n$ as an $R$-module. Then we can view $A$ as a degree $n$ $R$-algebra as follows. Choose a basis $u_1, \ldots, u_n$ for $A$ over $R$. Left multiplication by elements $a\in A$ induces a natural map $\iota \colon A \to \Mat_n(R)$. Then $\iota$ is clearly injective since $a \cdot 1 \neq 0$ if $a \neq 0$. It is in fact universally injective by the same observation, since $u_1, \ldots, u_n$ is also a basis for $A \otimes_R S$ over $S$, for any commutative $R$-algebra $S$.
\end{example}

\begin{example}[Split algebra of any degree]
\label{ex:split-alg}
For any $n \geq 1$, we may view $R^n$ as a degree $n$ $R$-algebra by left multiplication as in Example \ref{ex:char-polyn-generic-polyn}. In this case, the element $a = (r_1, \ldots, r_n)$ has characteristic polynomial $P_{R^n,a}(T) = \prod_{i=1}^n (T-r_i)$.
\end{example}

\begin{example}[Trivial algebra of any degree]
\label{ex:triv-generic-polyn}
For any $n\geq1$, we may view $R$ as a degree $n$ algebra over itself by choosing $\iota \colon R \to \Mat_n(R)$ as the diagonal embedding of $R$. Then $P_{R,a}(T) = (T-a)^n$ for any $a \in R$. Although seemingly trivial, this example plays a useful role.

More generally, for any degree $n$ algebra $(A,R',\iota)$ and integer $m \geq 1$, we may also give $A$ the structure of a degree $mn$ algebra via the diagonal block map $(\iota,\ldots,\iota) \colon A \to \Mat_{mn}(R')$. The characteristic polynomials are $m$-th powers of the original characteristic polynomials.
\end{example}

\begin{example}[Matrix algebras]
\label{ex:matrix-alg-polyn}
If $A = \Mat_n(R)$, then taking $\iota$ to be the identity map gives $A$ the structure of a degree $n$ algebra.
\end{example}

\begin{example}[Central simple algebras]
\label{ex:central-simple-alg-polyn}
If $A$ is a central simple algebra over a field $F$, then there exists a splitting field $K$ over $F$ such that $A \otimes_F K \simeq \Mat_n(K)$, where $n$ is the square root of the rank of $A$ as a $F$-vector space. We thus have an injection $\iota \colon A \to \Mat_n(K)$, and it is universally injective because $\iota$ is split. The polynomials $P_{A,a}$ agree with the {\em reduced characteristic polynomial} of a central simple algebra $A$, and it is well known that the coefficients lie in $F$ (see, e.g., \cite[\S IV.2]{berhuyoggier}). In this way, we may view $A$ as an algebra of degree equal to the square root of the rank of $A$ (which is the typical definition of the degree of a central simple algebra). 
\end{example}

\begin{remark}
One may generalize the definition of degree $n$ algebra to {\em locally} free $R$-modules of finite rank by requiring a universally injective homomorphism to an endomorphism algebra of a rank $n$ vector bundle over $R'$ instead. The definitions of and theorems for Galois closures will also generalize in a similar way, but we focus on the case of free $R$-modules in the rest of the paper for simplicity.
\end{remark}

We next introduce products of degree $n$ $R$-algebras.

\begin{definition}
\label{def:products of degree n Ralgs}
Let $(A_1,R_1,\iota_1)$ and $(A_2,R_2,\iota_2)$ be $R$-algebras of degrees $n_1$ and $n_2$, respectively. We define the product $(A_1,R_1,\iota_1)\times(A_2,R_2,\iota_2)$ to be the degree $n_1+n_2$ $R$-algebra $A=A_1\times A_2$ with the universally injective
composition
$$\iota \colon A_1 \times A_2 \stackrel{(\iota_1,\iota_2)}{\longrightarrow} \Mat_{n_1}(R_1) \times \Mat_{n_2}(R_2) \hookrightarrow \Mat_{n_1}(R_1 \times R_2) \times \Mat_{n_2}(R_1 \times R_2) \hookrightarrow \Mat_{n_1 + n_2} (R_1 \times R_2)$$
where the last injection is given by block diagonals.
If $a_i \in A_i$ has characteristic polynomials $P_{A_i,a_i}$ for $i\in\{1,2\}$, then the characteristic polynomial of $a = (a_1, a_2) \in A_1 \times A_2$ is clearly the product $P_{A,a}(T)=P_{A_1,a_1}(T)P_{A_2,a_2}(T) \in R[T]$.
\end{definition}

The following gives some further properties of product $R$-algebras.

\begin{lemma}
\label{l:sm for products}
Let $A_i$ be a degree $n_i$ $R$-algebra for $1 \leq i \leq k$, and endow $A = \prod_{i=1}^k A_i$ with its associated structure as a degree $n:=\sum_{i=1}^k n_i$ $R$-algebra.
\begin{enumerate}
\item \label{sm prods::sm}
If $a=(a_1,\ldots,a_k)\in A$, then
\[
s_{A,m}(a)=\sum_{\substack{0 \leq m_i \leq n_i \\ m_1+\cdots+m_k=m}}\prod_{i=1}^k s_{A_i,m_i}(a_i),
\]
where we set $s_{A_i,0}(a_i) = 1$.
\item \label{sm prods::sm special case} 
If $a = (0,\ldots, 0, a_j, 0, \ldots 0)$, then
\[
s_{A,m}(a)=s_{A_j, m}(a_j).
\]
\end{enumerate}
\end{lemma}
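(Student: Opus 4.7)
The plan is to reduce both parts of the lemma to a direct computation with the characteristic polynomial of a block diagonal matrix. Recall from Definition \ref{def:products of degree n Ralgs} that the structure map for $A=\prod_{i=1}^k A_i$ as a degree $n$ $R$-algebra is the block diagonal embedding into $\Mat_n(R_1\times\cdots\times R_k)$. Consequently, for $a=(a_1,\ldots,a_k)$, the matrix $\iota(a)$ is block diagonal with blocks $\iota_i(a_i)$, so its characteristic polynomial factors:
\[
P_{A,a}(T)=\prod_{i=1}^k P_{A_i,a_i}(T).
\]
This is the one input from linear algebra I would use; the rest is bookkeeping.

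For part (\ref{sm prods::sm}), I would rewrite each factor in the convenient form $P_{A_i,a_i}(T)=\sum_{m_i=0}^{n_i}(-1)^{m_i}s_{A_i,m_i}(a_i)T^{n_i-m_i}$ (using the convention $s_{A_i,0}(a_i)=1$ from the statement) and then expand the product:
\[
\prod_{i=1}^k P_{A_i,a_i}(T)=\sum_{0\le m_i\le n_i}(-1)^{m_1+\cdots+m_k}\Bigl(\prod_{i=1}^k s_{A_i,m_i}(a_i)\Bigr)T^{n-(m_1+\cdots+m_k)}.
\]
Grouping by the total $m=m_1+\cdots+m_k$ and comparing with $P_{A,a}(T)=\sum_{m=0}^n(-1)^m s_{A,m}(a)T^{n-m}$ yields the desired convolution formula.

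For part (\ref{sm prods::sm special case}), the key observation is that for $i\ne j$ we have $a_i=0$, and $\iota_i(0)=0$, so $P_{A_i,0}(T)=T^{n_i}$; equivalently $s_{A_i,m_i}(0)=0$ for $m_i\geq 1$ while $s_{A_i,0}(0)=1$. Substituting into the formula from part (\ref{sm prods::sm}), every term in the sum vanishes except the one with $m_i=0$ for all $i\neq j$ and $m_j=m$, which contributes exactly $s_{A_j,m}(a_j)$. No step presents a genuine obstacle; the only thing to keep track of carefully is the sign convention and the $m_i=0$ case, but the convention $s_{A_i,0}=1$ built into the statement is precisely what makes the formula uniform.
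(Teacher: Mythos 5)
Your proposal is correct and follows essentially the same route as the paper: both use the factorization $P_{A,a}(T)=\prod_i P_{A_i,a_i}(T)$ from the product structure and compare coefficients, and both deduce part (\ref{sm prods::sm special case}) from $P_{A_i,0}(T)=T^{n_i}$. Your bookkeeping (expanding each factor directly in terms of the $s_{A_i,m_i}$ rather than re-indexing generic coefficients) is a harmless, if anything slightly cleaner, variant of the paper's computation.
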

\begin{proof}
We first show (\ref{sm prods::sm}). We use the notation $[T^j]Q$ to denote the $T^j$-coefficient of a polynomial $Q(T)$. As $P_{A,a}(T)=\prod_j P_{A_j,a_j}(T)$, we have
\[
[T^{n-m}]P_{A,a}=\sum_{\substack{0 \leq m_i \leq n_i-m_i \\ m_1+\cdots+m_k=n-m}} \prod_j [T^{m_i}]P_{A_i,a_i}.
\]
Since the indices of the summation satisfy $0\leq m_i\leq n_i$, replacing $m_i$ by $n_i-m_i$ changes the above sum to
\[
[T^{n-m}]P_{A,a}=\sum_{\substack{0 \leq m_i \leq n_i \\ \sum(n_i-m_i)=n-m}} \prod_i [T^{n_i-m_i}]P_{A_i,a_i}=\sum_{\substack{0 \leq m_i \leq n_i \\ \sum m_i=m}} \prod_j [T^{n_i-m_i}]P_{A_i,a_i}
\]
and multiplying by $(-1)^m=\prod_i (-1)^{m_i}$ gives the result.

For (\ref{sm prods::sm special case}), we know that for all $i\neq j$, we have $P_{A_i,0}(T)=T^{n_i}$ and so $s_{A_i,m_i}(a)=\delta_{m_i,0}$. Thus, by (\ref{sm prods::sm}) we see $s_{A,m}(a)=s_{A_j, m}(a_j)$. 
\end{proof}

\subsection{Galois closures}

We now define the Galois closure for a degree $n$ $R$-algebra $A$. For an element $a \in A$, let $a^{(i)}$ denote $1 \otimes \cdots \otimes a \otimes \cdots \otimes 1\in A^{\otimes n}$, where $a$ is in the $i$-th tensor factor. Then consider the left ideal $I_{A/R}$ in $A^{\otimes n}$ generated by the elements
\begin{equation}
e_j(a^{(1)}, a^{(2)}, \ldots, a^{(n)})-s_j(a) \label{eq:elemsymrelations}
\end{equation}
for every $a\in A$ and $1 \leq j \leq n$, where $e_j$ denotes the $j$-th elementary symmetric function and $s_j(a) = s_{A,j}(a) \cdot (1 \otimes \cdots \otimes 1)$.

\begin{definition}
\label{def:Galois closure}
The {\em Galois closure} of a degree $n$ $R$-algebra $A$ is defined to be the left $A^{\otimes n}$-module
\begin{equation} \label{eq:Galclosuredef}
G(A/R):=A^{\otimes n}/I_{A/R}.
\end{equation}
\end{definition}

\begin{remark}
\label{rmk:Galois closure agrees with commutative case}
If $A$ is a commutative ring of rank $n$ over $R$, and if we endow $A$ with the degree $n$ algebra structure via left multiplication as in Example \ref{ex:char-polyn-generic-polyn}, then it is immediate from the definition that the Galois closure $G(A/R)$ agrees with the $S_n$-closure introduced in \cite{gc}.
\end{remark}

\begin{remark}
\label{rmk:Galois closure commutative case ring structure}
Unlike the case of commutative rings considered in \cite{gc}, here $G(A/R)$ does not necessarily have a natural ring structure since $I_{A/R}$ is not necessarily a two-sided ideal. In fact, in many cases of interest (e.g., if $A$ is the ring of $n\times n$ matrices $\Mat_n(R)$ for $n\geq3$), if we were to replace $I_{A/R}$ by the two-sided ideal generated by the elements \eqref{eq:elemsymrelations}, the expression \eqref{eq:Galclosuredef} would become $0$.
\end{remark}

\begin{remark}
If $(A,R',\iota)$ is a degree $n$ algebra and $B$ is an $R$-algebra with a universally injective homomorphism $B \to A$, then $(B,R',\iota)$ inherits the structure of a degree $n$ algebra. Then since $I_{B/R} \subset I_{A/R}$, there is a well-defined homomorphism $G(B/R) \to G(A/R)$ of left $B^{\otimes n}$-modules.
\end{remark}

\begin{remark}
\label{rmk:Galois closure depends weakly on Rprime}
Let $(A,R',\iota)$ be a degree $n$ $R$-algebra and let $R''$ be a commutative $R'$-algebra. Let $\psi\colon\Mat_n(R')\to\Mat_n(R'')$ be the induced morphism. If $\psi\iota\colon A\to\Mat_n(R'')$ is universally injective, e.g., if $R' \to R''$ is universally injective, then $(A,R'',\psi\iota)$ is a degree $n$ $R$-algebra. In this case, the characteristic polynomials $P_{A,a}$ for $(A,R',\iota)$ and $(A,R'',\psi\iota)$ are the same, so the associated Galois closures agree. 
\end{remark}

By definition, the module $G(A/R)$ has $n$ distinct $A$-actions (one on each tensor factor). We denote the $i$-th action of $a \in A$ on an element $b \in G(A/R)$ as $a \cdot_i b$. Furthermore, the natural action of $S_n$ on $A^{\otimes n}$ induces an $S_n$-action on $G(A/R)$ with the following property: for all $\sigma\in S_n$, $a\in A$, and $b\in G(A/B)$, we have $\sigma(a\cdot_i b)=a\cdot_{\sigma(i)}\sigma(b)$. This gives $G(A/R)$ the structure of 
a left module over the twisted group ring $A^{\otimes n}*S_n$ (or equivalently, an $S_n$-equivariant left $A^{\otimes n}$-module).

\subsection{Key properties: base change and the product formula}  \label{sec:properties}
The focus of this subsection is to prove two main properties of Galois closures: they commute with base change and they satisfy a product formula. Our first step is to show that the left ideal $I_{A/R}$
is generated by the expressions \eqref{eq:elemsymrelations} for basis elements.
\begin{proposition}
\label{prop:I is generated by basis expressions}
Let $A$ be a degree $n$ $R$-algebra. If $u_1,\ldots,u_m$ is a basis for $A$ over $R$, then $I_{A/R}$ is the left $A^{\otimes n}$-ideal generated by the expressions
\[
e_j(a^{(1)}, a^{(2)}, \ldots, a^{(n)})-s_j(a)
\]
for $a\in\{u_1,\ldots,u_m\}$.
\end{proposition}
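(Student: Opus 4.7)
The plan is induction on $j$ to show that $f_j(a) := e_j(a^{(1)}, \ldots, a^{(n)}) - s_j(a)$ lies in the left ideal $J$ generated by $\{f_{j'}(u_i) : 1 \le i \le m,\ 1 \le j' \le n\}$ for every $a \in A$ and $1 \le j \le n$. The inclusion $J \subseteq I_{A/R}$ is immediate, so this yields the claim.

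For the base case $j = 1$, the map $f_1$ is $R$-linear in $a$ (since $s_1 = \Tr$ is linear and $a \mapsto \sum_k a^{(k)}$ is clearly linear), so $f_1(\sum r_i u_i) = \sum_i r_i f_1(u_i) \in J$ immediately. For the inductive step, assume $f_{j'}(a') \in J$ for all $j' < j$ and all $a' \in A$. Writing $a = \sum_i r_i u_i$ and using that both sides are degree-$j$ homogeneous in the $r_i$, I expand
\[
e_j(a^{(1)}, \ldots, a^{(n)}) = \sum_{|\alpha|=j} r^\alpha c_\alpha, \qquad s_j(a) = \sum_{|\alpha|=j} r^\alpha \tilde s_\alpha,
\]
where
\[
c_\alpha = \sum_{\substack{T_1, \ldots, T_m \subseteq \{1, \ldots, n\} \\ \text{pairwise disjoint},\ |T_i| = \alpha_i}} \prod_i \prod_{k \in T_i} u_i^{(k)} \in A^{\otimes n}
\]
and $\tilde s_\alpha \in R$. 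For pure $\alpha = j\mathbf{e}_i$, $c_\alpha - \tilde s_\alpha = f_j(u_i) \in J$ by definition, so the task reduces to showing $c_\alpha - \tilde s_\alpha \in J$ when $\alpha$ is mixed (i.e. every $\alpha_i < j$).

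For mixed $\alpha$, I compare $c_\alpha$ with the unrestricted product $\prod_i e_{\alpha_i}(u_i^{(1)}, \ldots, u_i^{(n)})$: their difference is a sum of ``overlap terms'' arising from positions where $T_i \cap T_{i'} \neq \emptyset$, each such position $k$ contributing $(u_i u_{i'})^{(k)}$ in the $k$-th tensor factor. These overlap terms are themselves $c_{\alpha'}$-style expressions with strictly smaller multi-indices built out of products of basis elements, so combining the induction on $j$ with an auxiliary induction on overlap count, together with the inductive hypothesis $e_{\alpha_i}(u_i^{(1)}, \ldots) \equiv s_{\alpha_i}(u_i) \pmod{J}$ (which applies since $\alpha_i < j$), reduces $c_\alpha$ modulo $J$ to an explicit $R$-valued polynomial in the $s_{j'}$-values of words in the $u_i$'s. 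A universal identity, coming from the standard expansion of the coefficients of $\det(T - \sum_i r_i M_i)$ for arbitrary matrices $M_i$ over $\mathbb{Z}$, then identifies this polynomial with $\tilde s_\alpha$, giving $c_\alpha - \tilde s_\alpha \in J$ and hence $f_j(a) \in J$.

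The main obstacle is that $A^{\otimes n}$ is noncommutative and $J$ is only a left ideal, so one cannot a priori replace a factor inside a product by a congruent element modulo $J$. To handle this, I would verify the auxiliary fact that for $X = f_{j'}(b)$ and $Y = f_{j''}(b') \in J$ with $j', j'' < j$, the commutator $[X, Y]$ again lies in $J$: expanding the symmetric functions and using that $u^{(k)}$ and $v^{(k')}$ commute for $k \neq k'$, $[X, Y]$ becomes an $R$-linear combination of $f_{j'''}$-type expressions evaluated at commutators of basis elements, each with $j''' < \max(j', j'')$, and so lies in $J$ by induction. This commutator control is what permits the inductive congruences to propagate through products, and is the most delicate technical step in the argument.
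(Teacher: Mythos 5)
Your polarization strategy (expand in the coordinates $r_i$, treat each multidegree $\alpha$ separately, and reduce products of $E$-type expressions using the left-ideal property) is a reasonable skeleton, but there is a genuine gap at the crux: the ``universal identity'' you invoke at the end is asserted, not proved, and it is exactly the hard part of this proposition. What you need is that the $r^\alpha$-coefficient $\tilde s_\alpha$ of $\det(T-\sum_i r_i\iota(u_i))$ equals the \emph{specific} integer polynomial in the quantities $s_{j'}(w)$, with $w$ ranging over noncommutative words in the $u_i$, that your overlap recursion produces on the $e_j$-side. The ``standard expansion'' of $\det(T-\sum_i r_iM_i)$ only gives $\tilde s_\alpha$ as a polynomial in the \emph{entries} of the $M_i$; rewriting it as a polynomial in characteristic-polynomial coefficients of words in the $M_i$ (an Amitsur-type formula), and matching it term-by-term with the polynomial your mod-$J$ reduction yields, is precisely the nontrivial content, and you cannot verify it by specialization (tuples of simultaneously diagonalizable matrices are not Zariski dense). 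This is what the paper imports from \cite[Lemma 11]{gc}: the factorization of $1-(X+Y)T$ in $\ZZ\langle X,Y\rangle[[T]]$, which upon taking determinants on the $\iota$-side and slot-by-slot products in $A^{\otimes n}$ gives the \emph{same} universal addition law for the $s_j$'s and for the $e_j$-expressions simultaneously; without it (or an equivalent), your argument does not close. Relatedly, your claim that $\tilde s_\alpha\in R$ is unjustified: a priori these formal coefficients lie in $R'$, since the hypothesis only gives $s_{A,j}(a)\in R$ for specializations $a\in A$; in the paper their integrality is again a byproduct of the universal identity (and you cannot appeal to Lemma \ref{l:base changing the degree n alg structure}, whose proof relies on the argument of this very proposition).

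A secondary issue: the commutator lemma you flag as the most delicate step is both unnecessary and incorrectly sketched. All substitution targets in your reduction are central scalars $s_{j'}(w)\in R$, so you can replace factors one at a time from the right: $P_1\cdots P_{t-1}(P_t-p_t)\in J$ because $J$ is a left ideal, and the scalar $p_t$ then commutes past the remaining factors; no control of $[X,Y]$ for $X,Y\in J$ is required (only the induction hypothesis for all indices $j'<j$ at \emph{arbitrary} elements of $A$, which you correctly set up, so that words are covered). Moreover the claimed strict drop $j'''<\max(j',j'')$ already fails in the first instance: one computes $[f_1(b),f_1(b')]=f_1([b,b'])$, of index exactly $\max(j',j'')$, using $s_{A,1}([b,b'])=0$. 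If you import the factorization identity of \cite[Lemma 11]{gc} to supply the missing universal identity, your argument becomes a polarized variant of the paper's proof; as written, the key identity is a black box.
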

\begin{proof}
By definition, there is a finitely generated commutative $R$-algebra $R'$ and a universally injective $R$-algebra morphism $\iota\colon A\to\Mat_n(R')$ such that $P_{A,a}(T)=\det(T-\iota(a))\in R[T]$ for all $a\in A$. We then have
\begin{equation} \label{eq:1-aT}
\det(1-\iota(a)T)=\sum_{j=0}^n (-1)^js_{A,j}(a)T^j.
\end{equation}
As shown in \cite[Lemma 11]{gc}, for a noncommutative polynomial ring $\ZZ\langle X,Y \rangle$ over $\ZZ$ generated by $X$ and $Y$, there is a unique sequence $\{f_d(X,Y)\}_{d=0}^\infty$ of homogeneous degree $d$ polynomials in $\ZZ\langle X,Y \rangle$ such that 
$$(1 - (X+Y)T) = (1-XT)(1-YT)\prod_{d=0}^\infty (1 - f_d(X,Y) X Y T^{d+2})$$
in $\ZZ\langle X,Y \rangle[[T]]$. 
In particular, for any $a,b\in A$, letting $x=\iota(a)$ and $y=\iota(b)$, we have
\begin{equation} \label{eq:XYT}
1-(x+y)T=(1-xT)(1-yT)\prod_{d=0}^{k-2}(1-f_d(x,y)xyT^{d+2})\mod T^{k+1}.
\end{equation}
Taking determinants of both sides of \eqref{eq:XYT} and equating the coefficients of $T^k$, equation \eqref{eq:1-aT} yields an expression for $s_{A,k}(a+b)$ in terms of $s_{A,k}(a)$, $s_{A,k}(b)$, and $s_{A,i}(q_j(a,b))$ for $i<k$, where the $q_j$ are noncommutative polynomials. In the case where $A$ is the split degree $n$ algebra $R^n$ as in Example \ref{ex:split-alg}, the $s_{A,k}$ are the elementary symmetric functions $e_k$.

Combining the above with the observation that $s_{A,i}(ra)=r^is_{A,i}(a)$ for all $r\in R$, we see by induction on $k$ that $s_{A,k}(a)$ is expressible in terms of the $s_{A,j}(u_\ell)$ for $j\leq k$. Since the elementary symmetric functions $e_k$ satisfy these same relations (since they correspond to the special case where $A=R^n$), we conclude that $I_{A/R}$ is generated by the expressions $e_j(a^{(1)}, a^{(2)}, \ldots, a^{(n)})-s_{j}(a)$ for $a\in\{u_1,\ldots,u_m\}$.
\end{proof}

Next, we show that if $A$ is a degree $n$ $R$-algebra, and $R\to S$ is a map of commutative rings, then $A\otimes_R S$ carries a natural degree $n$ $S$-algebra structure.

\begin{lemma}
\label{l:base changing the degree n alg structure}
Let $R$ be a commutative ring and $S$ a commutative $R$-algebra. If $(A,R',\iota)$ is a degree $n$ $R$-algebra, then $(A\otimes_R S,R'\otimes_R S,\iota\otimes1)$ is a degree $n$ $S$-algebra.
\end{lemma}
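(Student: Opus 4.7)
My plan is to check the conditions of Definition \ref{def:degree-n-algebras} one by one for $(A \otimes_R S, R' \otimes_R S, \iota \otimes 1)$. First I would handle the bookkeeping items: if $u_1, \ldots, u_m$ is an $R$-basis of $A$, then $u_1 \otimes 1, \ldots, u_m \otimes 1$ is an $S$-basis of $A \otimes_R S$, so $A\otimes_R S$ is free of finite rank over $S$; the image of $S = R \otimes_R S$ in $A \otimes_R S$ lies in the center because $R$ maps to the center of $A$; and generators of $R'$ over $R$ become generators of $R' \otimes_R S$ over $S$ after tensoring with $1$.

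For the two universal-injectivity conditions---that $S \to R' \otimes_R S$ and $\iota \otimes 1 \colon A \otimes_R S \to \Mat_n(R' \otimes_R S)$ are universally injective---I would use the canonical identifications $(R' \otimes_R S) \otimes_S T \simeq R' \otimes_R T$ and $(A \otimes_R S) \otimes_S T \simeq A \otimes_R T$ for any commutative $S$-algebra $T$. Viewing $T$ as a commutative $R$-algebra via $R \to S \to T$, the hypothesized universal injectivity of $R \to R'$ and of $\iota$ applied to this $T$ directly yield the desired injections.

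The heart of the argument is to show that for every $a \in A \otimes_R S$, the characteristic polynomial $P_{A \otimes_R S, a}(T) = \det(T - (\iota \otimes 1)(a)) \in (R' \otimes_R S)[T]$ has coefficients lying in $S$. Here I would replay the reduction used in the proof of Proposition \ref{prop:I is generated by basis expressions}. Writing $a = \sum_i s_i (u_i \otimes 1)$ with $s_i \in S$, the matrix $(\iota \otimes 1)(u_i \otimes 1) = \iota(u_i) \otimes 1$ has characteristic polynomial equal to the base change of $P_{A, u_i}(T) \in R[T]$, so its $s_{A \otimes_R S, j}$-values equal $s_{A, j}(u_i) \in R \subset S$. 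The noncommutative identity \eqref{eq:XYT}, applied in $\Mat_n(R' \otimes_R S)$, together with the scaling formula $\det(T - sM) = \sum_j (-1)^j s^j c_j(M) T^{n-j}$ for $s \in S$, expresses each coefficient $s_{A \otimes_R S, k}(a)$ by induction on $k$ as an integer-coefficient polynomial in the scalars $s_i$ and in $s_{A \otimes_R S, j}$ evaluated on noncommutative polynomial expressions in the $u_i \otimes 1$. Each such expression again lies in the $R$-subalgebra $A \otimes 1 \subset A \otimes_R S$, so its characteristic polynomial coefficients are in $R$, and therefore $s_{A \otimes_R S, k}(a) \in S$.

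The only delicate point is the last step, since the identity \eqref{eq:XYT} introduces auxiliary noncommutative polynomials $f_d(a, b) ab$ whose characteristic polynomial coefficients feed back into the induction. However, because the reduction in the proof of Proposition \ref{prop:I is generated by basis expressions} is purely formal and uses only the scaling relation together with \eqref{eq:XYT}---both of which pass transparently from $R$ to $S$ under base change---the same argument applies verbatim with $R$ replaced by $S$, and the lemma follows.
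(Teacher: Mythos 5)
Your proposal is correct and follows essentially the same route as the paper: universal injectivity is inherited via the canonical identification $(A\otimes_R S)\otimes_S T\simeq A\otimes_R T$, and the characteristic-polynomial condition is reduced, exactly as in the proof of Proposition \ref{prop:I is generated by basis expressions} via the identity \eqref{eq:XYT} together with the scaling relation $s_{A\otimes_R S,i}(c\cdot b)=c^i s_{A\otimes_R S,i}(b)$, to pure tensors (scalar multiples of basis elements), whose coefficients visibly lie in $S$. Your additional checks of freeness, centrality, and finite generation of $R'\otimes_R S$ are routine points the paper leaves implicit.
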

\begin{proof}
By definition, $\iota$ is universally injective, so $\iota\otimes1\colon A\otimes_RS\to\Mat_n(R'\otimes_RS)$ is as well. It remains to prove that if $b\in A\otimes_RS$, then the characteristic polynomial of $(\iota\otimes1)(b)$ lives in $S[T]$. The proof of Proposition \ref{prop:I is generated by basis expressions} yields an integral polynomial expression for the characteristic polynomial of $x+y$ in terms of the characteristic polynomials of $x$ and $y$. Since $b$ is a sum of pure tensors, we are therefore reduced to the case where $b$ is a pure tensor itself, e.g., $b=a\otimes c$ for $a \in A$ and $c \in S$. Then $(\iota\otimes1)(b)$ is the product of $\iota(a)\otimes1$ and the scalar $c\in S$. So $s_{A\otimes_RS,i}(b)=c^i s_{A,i}(a)$. Since $(A,R',\iota)$ is a degree $n$ $R$-algebra, we have $s_{A,i}(a)\in R$, and hence $s_{A\otimes_RS,i}(b)\in S$, as desired.
\end{proof}
\begin{remark}
\label{rmk:base changed sj of pure tensors}
While proving Lemma \ref{l:base changing the degree n alg structure}, we showed that $s_{A\otimes_R S,i}(a\otimes c)=c^i s_{A,i}(a)$ for all $a\in A$ and $c\in S$.
\end{remark}

The map $A\to A\otimes_R S$ induces a map $A^{\otimes n}\to (A\otimes_R S)^{\otimes n}$. This latter morphism sends $I_{A/R}$ into $I_{(A\otimes_R S)/S}$ by Remark \ref{rmk:base changed sj of pure tensors}, and hence induces a map $G(A/R)\otimes_R S\to G((A\otimes_R S)/S)$, which we refer to as the {\em base change map}. Note that $(A \otimes_R S)^{\otimes n} \simeq A^{\otimes n} \otimes_R S$ acts on both $G(A/R) \otimes_R S$ and $G((A \otimes_R S)/S)$.

\begin{theorem}[Base change]
\label{thm:base change}
Let $A$ be a degree $n$ $R$-algebra and let $S$ be a commutative $R$-algebra. The base change map yields an isomorphism
\[
G(A/R)\otimes_RS\simeq G((A\otimes_R S)/S)
\]
of left modules over $(A \otimes_R S)^{\otimes n}*S_n$.
\end{theorem}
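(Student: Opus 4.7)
The plan is to unwind both sides of the claimed isomorphism in terms of their defining presentations and reduce to an equality of left ideals. Using the natural identification $(A\otimes_R S)^{\otimes n}\simeq A^{\otimes n}\otimes_R S$ and the right exactness of $-\otimes_R S$, we have
\[
G(A/R)\otimes_R S \simeq (A^{\otimes n}\otimes_R S)/J,
\]
where $J$ is the image of $I_{A/R}\otimes_R S$ in $A^{\otimes n}\otimes_R S$; equivalently, $J$ is the left $(A\otimes_R S)^{\otimes n}$-ideal generated by $I_{A/R}\otimes 1$. Since the base change map, by construction, sends $I_{A/R}$ into $I_{(A\otimes_R S)/S}$, it factors through $(A^{\otimes n}\otimes_R S)/J$ and is induced by the inclusion $J\subseteq I_{(A\otimes_R S)/S}$. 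The theorem therefore reduces to proving the reverse inclusion $I_{(A\otimes_R S)/S}\subseteq J$.

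For this reverse inclusion, I would fix an $R$-basis $u_1,\ldots,u_m$ of $A$, so that $u_1\otimes 1,\ldots,u_m\otimes 1$ is an $S$-basis of $A\otimes_R S$ (noting that $A\otimes_R S$ inherits a degree $n$ $S$-algebra structure via Lemma \ref{l:base changing the degree n alg structure}). Applying Proposition \ref{prop:I is generated by basis expressions} to the $S$-algebra $A\otimes_R S$ with this basis shows that $I_{(A\otimes_R S)/S}$ is generated as a left $(A\otimes_R S)^{\otimes n}$-ideal by the expressions
\[
e_j\bigl((u_i\otimes 1)^{(1)},\ldots,(u_i\otimes 1)^{(n)}\bigr) - s_{A\otimes_R S,j}(u_i\otimes 1).
\]
Under the identification of tensor algebras, $(u_i\otimes 1)^{(k)}$ corresponds to $u_i^{(k)}\otimes 1$, and by Remark \ref{rmk:base changed sj of pure tensors} we have $s_{A\otimes_R S,j}(u_i\otimes 1)=s_{A,j}(u_i)$. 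Each such generator is therefore the image under $-\otimes 1$ of a generator of $I_{A/R}$, and hence lies in $J$.

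Equivariance under $(A\otimes_R S)^{\otimes n}*S_n$ is essentially automatic: both the $(A\otimes_R S)^{\otimes n}$-action and the $S_n$-permutation action on each side are inherited from the natural actions on $A^{\otimes n}\otimes_R S$, which agree under the identifications above. The main conceptual input is Proposition \ref{prop:I is generated by basis expressions}, which lets both ideals be described by generators indexed by a common finite $R$-basis of $A$; without this reduction, controlling the image of $I_{A/R}$ inside $I_{(A\otimes_R S)/S}$ would be awkward since $I_{A/R}$ is a priori generated by an infinite family indexed by all of $A$, and verifying that every generator of $I_{(A\otimes_R S)/S}$ comes from base change would require reproving the inductive identity from the proof of Proposition \ref{prop:I is generated by basis expressions}.
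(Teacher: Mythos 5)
Your proposal is correct and follows essentially the same route as the paper: both apply Proposition \ref{prop:I is generated by basis expressions} to a fixed $R$-basis $u_1,\ldots,u_m$ of $A$ (and the induced $S$-basis $u_\ell\otimes 1$ of $A\otimes_R S$), observe via Remark \ref{rmk:base changed sj of pure tensors} that the generators correspond under base change, and conclude that $I_{(A\otimes_R S)/S}$ is the extension of $I_{A/R}$, so the base change map is an isomorphism. Your added remarks on right exactness of $-\otimes_R S$ and on $(A^{\otimes n}*S_n)$-equivariance simply make explicit what the paper leaves implicit.
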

\begin{proof}
Let $\epsilon_j(a):=e_j(a^{(1)}, a^{(2)}, \ldots, a^{(n)})-s_j(a)$. If $u_1,\ldots,u_m$ is a basis for $A$ over $R$, then the $u_\ell\otimes 1$ give a basis for $A\otimes_R S$ over $S$. Proposition \ref{prop:I is generated by basis expressions} implies that $I_{A/R}$ is generated by the expressions $\epsilon_j(u_\ell)$ and $I_{(A\otimes_R S)/S}$ is generated by the expressions $\epsilon_j(u_\ell\otimes1)=\epsilon_j(u_\ell)\otimes1$. Hence $I_{(A\otimes_R S)/S}$ is the extension of the ideal $I_{A/R}$. Consequently, the base change map is an isomorphism $G(A/R)\otimes_RS\simeq G((A\otimes_R S)/S)$.
\end{proof}

We next compute the Galois closure of a product in terms of the Galois closures of the factors.
\begin{theorem}[Product formula]
\label{thm:product-formula}
For $1 \leq j \leq k$, let $A_j$ be a degree $n_j$ $R$-algebra and endow $A = A_1 \times \cdots \times A_k$ with the associated structure of a degree $n = \sum_{i=1}^k n_i$ $R$-algebra. Then we have an isomorphism of left $(A^{\otimes n}*S_n)$-modules
\begin{equation} \label{eq:productformula}
G(A/R) \simeq \left( G(A_1/R)\otimes\cdots\otimes G(A_k/R) \right)^N
\end{equation}
where $N$ is the multinomial coefficient ${n\choose n_1,\ldots,n_k}$. As a representation of $S_n$, the right hand side of \eqref{eq:productformula} is the induced representation from $S_{n_1} \times \cdots \times S_{n_k}$ acting on $G(A_1/R) \otimes \cdots \otimes G(A_k/R)$, and the $A^{\otimes n}$-action on the right hand side of \eqref{eq:productformula} is given by $n_i$ actions of $A$ on each $G(A_i/R)$.
\end{theorem}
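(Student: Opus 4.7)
My strategy is to decompose $G(A/R)$ via the central idempotents coming from the product structure on $A$, show that only the ``balanced'' pieces survive modulo $I_{A/R}$, and identify each surviving piece with $G(A_1/R)\otimes\cdots\otimes G(A_k/R)$. Let $\varepsilon_i\in A$ be the central idempotent projecting to the factor $A_i$. For each function $\phi\colon\{1,\ldots,n\}\to\{1,\ldots,k\}$, form the central idempotent $E_\phi := \prod_{j=1}^n \varepsilon_{\phi(j)}^{(j)} \in A^{\otimes n}$. These give a complete orthogonal decomposition $A^{\otimes n} = \bigoplus_\phi E_\phi A^{\otimes n}$ with $E_\phi A^{\otimes n} \simeq A_{\phi(1)}\otimes\cdots\otimes A_{\phi(n)}$, and $S_n$ permutes the summands according to its action on colorings. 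Call $\phi$ \emph{balanced} if $m_i(\phi) := |\phi^{-1}(i)| = n_i$ for every $i$.

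The first key step is to show $E_\phi \in I_{A/R}$ whenever $\phi$ is unbalanced. Since $\iota_i(1) = I_{n_i}$, the characteristic polynomial of $1_{A_i}$ is $(T-1)^{n_i}$, so $s_{A_i,m}(1_{A_i}) = \binom{n_i}{m}$; combining with Lemma \ref{l:sm for products}\eqref{sm prods::sm special case} gives $s_{A,m}(\varepsilon_i) = \binom{n_i}{m}$. Hence the defining relation $e_m(\varepsilon_i^{(1)},\ldots,\varepsilon_i^{(n)}) \equiv \binom{n_i}{m}$ holds in $G(A/R)$. The $\varepsilon_i^{(j)}$ pairwise commute and are idempotent, so expanding $e_m$ and grouping by coloring yields
\[ e_m(\varepsilon_i^{(1)},\ldots,\varepsilon_i^{(n)}) = \sum_\phi \binom{m_i(\phi)}{m} E_\phi. \]
Multiplying the relation by the orthogonal idempotent $E_\phi$ gives $\bigl(\binom{m_i(\phi)}{m} - \binom{n_i}{m}\bigr) E_\phi \in I_{A/R}$ for all $i, m$. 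If $\phi$ is unbalanced, then $\sum_i m_i(\phi) = n = \sum_i n_i$ forces $m_i(\phi) > n_i$ for some $i$; choosing $m = m_i(\phi)$ produces coefficient $1$, so $E_\phi \in I_{A/R}$.

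Next, for the standard balanced coloring $\phi_0$ with $\phi_0^{-1}(i) = \{n_1+\cdots+n_{i-1}+1,\ldots,n_1+\cdots+n_i\}$, I identify $E_{\phi_0}G(A/R) \simeq G(A_1/R)\otimes\cdots\otimes G(A_k/R)$. Under the isomorphism $E_{\phi_0}A^{\otimes n} \simeq A_1^{\otimes n_1}\otimes\cdots\otimes A_k^{\otimes n_k}$, multiplication by $E_{\phi_0}$ sends $a^{(j)}$ to $a_{\phi_0(j)}^{(j)}$ in the appropriate block. Expanding the generating series $\prod_j(1+ua_{\phi_0(j)}^{(j)}) = \prod_i \prod_{\phi_0(j)=i}(1+ua_i^{(j)})$ and comparing with Lemma \ref{l:sm for products}\eqref{sm prods::sm} writes $E_{\phi_0}\cdot(e_m(a^{(1)},\ldots,a^{(n)})-s_{A,m}(a))$ as a telescoping sum of terms, each lying in the left ideal generated by the defining relations of some $I_{A_i/R}$. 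Conversely, specializing to $a = a_i\varepsilon_i$ and applying Lemma \ref{l:sm for products}\eqref{sm prods::sm special case} recovers every generator of $I_{A_i/R}$ inside $E_{\phi_0}I_{A/R}$. Thus $E_{\phi_0}I_{A/R}$ equals the extension of $\sum_i I_{A_i/R}$ to $\bigotimes_i A_i^{\otimes n_i}$, yielding the desired identification.

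Finally, $S_n$ acts transitively on the $N = \binom{n}{n_1,\ldots,n_k}$ balanced colorings with $\stab(\phi_0) = S_{n_1}\times\cdots\times S_{n_k}$, giving
\[ G(A/R) = \bigoplus_{\phi\text{ balanced}} E_\phi G(A/R) \simeq \Ind_{S_{n_1}\times\cdots\times S_{n_k}}^{S_n}\bigl(G(A_1/R)\otimes\cdots\otimes G(A_k/R)\bigr) \]
as left $A^{\otimes n}*S_n$-modules, establishing the product formula. The main obstacle is the first step: ensuring that the relations coming only from the single idempotents $\varepsilon_i$ suffice to annihilate every unbalanced $E_\phi$. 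This works only thanks to the orthogonality of the $E_\phi$ (which isolates scalar coefficients) together with the freedom to choose $m$ exceeding $n_i$ (which makes those coefficients units).
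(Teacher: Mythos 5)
Your proposal is correct and takes essentially the same route as the paper's proof: decompose $A^{\otimes n}$ by the coloring idempotents $E_\phi$, kill the unbalanced pieces via the elementary-symmetric relations in the $\varepsilon_i^{(j)}$ (the paper uses $e_{n_j}(\varepsilon_j^{(1)},\ldots,\varepsilon_j^{(n)})=1$ with a color occurring fewer than $n_j$ times, the mirror image of your choice $m=m_i(\phi)>n_i$), identify each balanced piece with $G(A_1/R)\otimes\cdots\otimes G(A_k/R)$, and assemble the result as the induced module. The only cosmetic difference is in the balanced block: you prove the two inclusions directly, via a telescoping/generating-series identity for arbitrary $a$ and the specialization $a=a_i\varepsilon_i$, whereas the paper first reduces the generators of $I_{A/R}$ to elements of the form $a_j\varepsilon_j$ (via Proposition \ref{prop:I is generated by basis expressions}) and then multiplies those generators by the block idempotent.
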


\begin{proof}
Since $A=\prod_j A_j$, there exist idempotents $\varepsilon_j$ in the center $Z(A)$ of $A$ for $1 \leq j \leq k$, such that $A_j=\varepsilon_jA=A\varepsilon_j$ and $\varepsilon_i\varepsilon_j=\delta_{i,j}\varepsilon_j$ where $\delta$ is the Kronecker delta function. Let $[k]=\{1,2,\cdots,k\}$ and for every $n$-tuple $\underline{i}=(i_1,\cdots,i_n)\in [k]^n$, let $A_{\underline{i}}:=A_{i_1}\otimes A_{i_2}\otimes\cdots\otimes A_{i_n}$. Then
\[
A^{\otimes n}=\prod_{\underline{i}\in [k]^n}A_{\underline{i}}.
\]
This product decomposition corresponds to the idempotents $\varepsilon_{\underline{i}}\in Z(A^{\otimes n})$ defined by
\[
\varepsilon_{\underline{i}}=\varepsilon_{i_1}\otimes \varepsilon_{i_2}\otimes\cdots \otimes \varepsilon_{i_n}=\varepsilon_{i_1}^{(1)} \varepsilon_{i_2}^{(2)}\cdots \varepsilon_{i_n}^{(n)}.
\]
Notice that if $J\subseteq A^{\otimes n}$ is a left ideal, then $\varepsilon_{\underline{i}}J=J\varepsilon_{\underline{i}}$ is a left ideal, which can be identified with a left ideal $J_{\underline{i}}$ of $A_{\underline{i}}$. Moreover, $J=\prod_{\underline{i}} J_{\underline{i}}$. In particular,
\[
I_{A^{\otimes n}/R}=\prod_{\underline{i}\in[k]^n} I_{\underline{i}}.
\]

Our first goal is to show that $I_{\underline{i}}=A_{\underline{i}}$ unless $\#\{\ell\mid i_\ell=j\}=n_j$ for every $j$, i.e., unless $A_{\underline{i}}\simeq A_{i_1}^{\otimes n_1}\otimes A_{i_2}^{\otimes n_2}\otimes\cdots A_{i_k}^{\otimes n_k}$. 
Let $\underline{i}$ be an $n$-tuple for which this does not hold; then there is some $j$ with $\#\{\ell\mid i_\ell=j\}<n_j$. By Lemma \ref{l:sm for products} (\ref{sm prods::sm special case}), we know that $s_{A,n_j}(\varepsilon_j)=s_{A_j,n_j}(1)=1$. So, the $n_j$-th elementary symmetric function in the $\varepsilon_j^{(\ell)}$ equals 1 in the module $G(A/R)$, i.e.,
\[
1-\sum_{1\leq r_1<r_2<\cdots<r_{n_j}\leq n}\varepsilon_j^{(r_1)}\varepsilon_j^{(r_2)}\cdots \varepsilon_j^{(r_{n_j})}\in I_{A^{\otimes n}/R}.
\]
Since $j$ occurs fewer than $n_j$ times in the $n$-tuple $\underline{i}$, we see $\varepsilon_{\underline{i}}\varepsilon_j^{(r_1)}\varepsilon_j^{(r_2)}\cdots \varepsilon_j^{(r_{n_j})}=0$ for every summand above, and so $\varepsilon_{\underline{i}}\in \varepsilon_{\underline{i}} I_{A^{\otimes n}/R}=I_{\underline{i}}$, i.e., we have $I_{\underline{i}}=A_{\underline{i}}$, as desired.

Next, let $\underline{i}\in [k]^n$ such that $A_{\underline{i}}\simeq A_{i_1}^{\otimes n_1}\otimes A_{i_2}^{\otimes n_2}\otimes\cdots A_{i_k}^{\otimes n_k}$. We show in this case that $A_{\underline{i}}/I_{\underline{i}}\simeq G(A_1/R)\otimes G(A_2/R)\otimes\cdots G(A_k/R)$. To do so, it is enough to consider the specific case where $\underline{i}=(i_1,\cdots,i_n)$ is equal to $(\underbrace{1,1,\cdots,1}_{n_1},\cdots,\underbrace{k,k,\cdots,k}_{n_k})$, since this is the case up to permutation. First note that $A$ is generated by elements of the form $a_j\varepsilon_j$ where $a_j\in A_j$ and $1\leq j\leq n$. So, by Theorem \ref{thm:base change}, we know that $I_{A^{\otimes n}/R}$ is generated as a left ideal by elements of the form
\[
s_{A,m}(a_j\varepsilon_j)-\sum_{1\leq r_1<r_2<\cdots <r_m\leq n}(a_j\varepsilon_j)^{(r_1)}(a_j\varepsilon_j)^{(r_2)}\cdots (a_j\varepsilon_j)^{(r_m)},
\]
and so $I_{\underline{i}}$ is generated by the above elements after left multiplying by $\varepsilon_{\underline{i}}$. First notice that by Lemma \ref{l:sm for products} (\ref{sm prods::sm special case}) we know $s_{A,m}(a_j\varepsilon_j)=s_{A_j,m}(a_j)$, which is 0 if $m>n_j$. Next note that $i_\ell=j$ if and only if $n_1+n_2+\cdots+n_{j-1}+1\leq\ell\leq n_1+n_2+\cdots+n_j$. So, multiplying the above expression by $\varepsilon_{\underline{i}}$ is 0 if $m>n_j$, and otherwise we obtain
\[
s_{A_j,m}(a_j)-\sum_{n_1+\cdots+n_{j-1}+1\leq r_1<r_2<\cdots <r_m\leq n_1+\cdots+n_j}\varepsilon_{\underline{i}}\, a_j^{(r_1)}a_j^{(r_2)}\cdots a_j^{(r_m)}
\]
which is nothing more than
\[
s_{A_j,m}(a_j)-\underbrace{\varepsilon_1\otimes\cdots \varepsilon_1}_{n_1} \otimes\cdots\otimes\sum_{1\leq r_1<r_2<\cdots <r_m\leq n_j} a_j^{(r_1)}a_j^{(r_2)}\cdots a_j^{(r_m)}\otimes\cdots\otimes \underbrace{\varepsilon_k\otimes\cdots \varepsilon_k}_{n_{k}}.
\]
This shows $I_{\underline{i}}=I_{A_1^{\otimes n_1}/R}\otimes\cdots\otimes I_{A_k^{\otimes n_k}/R}$ and therefore $A_{\underline{i}}/I_{\underline{i}}\simeq G(A_1/R)\otimes G(A_2/R)\otimes\cdots G(A_k/R)$ as desired.
\end{proof}

\section{Examples of Galois closures} \label{sec:examples}

In this section, we give many examples of Galois closures.

\subsection{Trivial algebras} \label{sec:extrivalgs}
As in Example \ref{ex:triv-generic-polyn}, for any $n \geq 1$, we may view the $R$-algebra $A = R$ as a degree $n$ algebra with characteristic polynomial $P_{R,r}(T) = (T-r)^n$. It is easy to check that $I_{A/R}=0$, so the Galois closure $G(A/R)$ is isomorphic to $R$ itself. The $S_n$-action is trivial, and the $n$ $A$-actions are all the same, namely the usual multiplication by elements of $A = R$. This seemingly trivial example plays a role in many of the examples in Section \ref{sec:vinberg}.

\subsection{Quadratic algebras} \label{sec:exquadalgs}
\label{subsec:quadratic algebras}

Suppose $A$ is a degree $2$ $R$-algebra.  Then every element $a \in A$ satisfies an equation of the form
	\begin{equation} \label{eq:monicpolyfordeg2}
	a^2 - \Tr(a) a + \No(a) = 0,
	\end{equation}
where $\Tr(a)$ and $\No(a)$ are the trace and norm, respectively, of $a$. Let $\overline{a} := \Tr(a) - a$, which one should think of as the conjugate of $a$. It is easy to check that $a \overline{a} = \overline{a} a = \No(a)$ and $\overline{ab} = \overline{b} \overline{a}$.

The Galois closure $G(A/R)$ is the quotient of $A \otimes A$ by the left ideal $I_{A/R}$, generated by the elements
$$a \otimes 1 + 1 \otimes a - \Tr(a) (1 \otimes 1) \quad \mathrm{and} \quad a \otimes a - \No(a) (1 \otimes 1)$$
for all $a \in A$.  We will show that $G(A/R)$ is isomorphic to $A$ in this case. We first give $A$ the structure of a left $(A^{\otimes2}*S_2)$-module as follows: given $b \in A$ and a pure tensor $a_1\otimes a_2\in A^{\otimes2}$, let $(a_1\otimes a_2)\cdot b:=a_1b\overline{a}_2$. If $\sigma\in S_2$ denotes the non-trivial element, then the $S_2$-action on $A$ is given by $\sigma(b):=\overline{b}$.

\begin{proposition} \label{prop:quadratic}
If $A$ is a degree 2 $R$-algebra, then the morphism  $\varphi \colon A\otimes A \to A$ given by $\varphi(b\otimes c)=b\overline{c}$ induces an isomorphism
\[
G(A/R)\simeq A
\]
of left $(A^{\otimes 2}*S_2)$-modules.
\end{proposition}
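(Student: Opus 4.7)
The plan is to verify that $\varphi$ descends to the quotient $G(A/R)=A^{\otimes 2}/I_{A/R}$, that the resulting map $\bar\varphi$ is $(A^{\otimes 2}\ast S_2)$-equivariant, and finally that it is an isomorphism by exhibiting an explicit inverse.

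First I would check that $\varphi$ annihilates every generator of the left ideal $I_{A/R}$, and more importantly every left multiple of such a generator. Using the identities $\overline{ya}=\bar a\bar y$ and $a\bar a=\No(a)\in R\subset Z(A)$, a direct computation gives, for a pure tensor $x\otimes y\in A\otimes A$,
\[
\varphi\bigl((x\otimes y)\cdot(a\otimes 1+1\otimes a-\Tr(a))\bigr)=x(a+\bar a-\Tr(a))\bar y=0,
\]
and
\[
\varphi\bigl((x\otimes y)\cdot(a\otimes a-\No(a))\bigr)=xa\bar a\bar y-\No(a)\,x\bar y = \No(a)\,x\bar y - \No(a)\,x\bar y=0,
\]
where the last equality uses that $\No(a)$ is central. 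Hence $\varphi$ factors through $\bar\varphi\colon G(A/R)\to A$.

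Next I would verify the equivariance claims. For the $A^{\otimes 2}$-action, one computes $\varphi((a_1\otimes a_2)(b\otimes c))=a_1b\bar c\bar a_2$, which agrees with $(a_1\otimes a_2)\cdot(b\bar c)$ in the given module structure on $A$. For the $S_2$-action, using $\overline{b\bar c}=c\bar b$ and $\bar{\bar c}=c$, we get $\varphi(c\otimes b)=c\bar b=\sigma(\varphi(b\otimes c))$, so $\bar\varphi$ is $S_2$-equivariant as well.

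The main step is to exhibit a two-sided inverse to $\bar\varphi$. The trace relation implies that modulo $I_{A/R}$ we have $1\otimes c\equiv \bar c\otimes 1$, and therefore
\[
b\otimes c=(b\otimes 1)(1\otimes c)\equiv b\bar c\otimes 1\pmod{I_{A/R}}.
\]
So every class in $G(A/R)$ is represented by some $a\otimes 1$, giving a surjection $\psi\colon A\to G(A/R)$, $a\mapsto[a\otimes 1]$. Since $\bar\varphi\circ\psi=\id_A$, the map $\psi$ is injective, hence an isomorphism, and so is $\bar\varphi$. The mildest obstacle is bookkeeping the left-module structure when verifying that $\varphi$ kills the \emph{entire} left ideal rather than just the generators, which is exactly where the identity $\overline{ya}=\bar a\bar y$ and the centrality of $\No(a)$ are needed.
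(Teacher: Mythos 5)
Your proof is correct and follows essentially the same route as the paper: the same map $\varphi$, the same verification that the left ideal $I_{A/R}$ is killed, and the same section $\psi(a)=[a\otimes 1]$, with your reduction $b\otimes c\equiv b\bar c\otimes 1$ via the trace relation being literally the paper's identity $b\otimes c=\Tr(c)b^{(1)}-(bc)^{(1)}$. The only difference is that you spell out details (annihilating left multiples of the generators, $S_2$- and $A^{\otimes 2}$-equivariance) that the paper dismisses as ``one easily checks,'' which is fine.
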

\begin{proof}
One easily checks that $\varphi$ is well defined and a morphism of left $(A^{\otimes 2}*S_2)$-modules.  It is clear that $\varphi(I_{A/R})=0$, so we obtain an induced map $\overline\varphi \colon G(A/R)\to A$. Note that $\varphi(a \otimes 1)=a$ for all $a\in A$, so $\overline\varphi$ is surjective.

Since $\overline\varphi$ is a morphism of left $(A^{\otimes 2}*S_2)$-modules, to complete the proof, it is enough to show $\overline\varphi$ is an isomorphism of $R$-modules. Consider the $R$-module morphism $\psi\colon A\to G(A/R)$ given by $\psi(a)=a^{(1)}=a\otimes1$. It is injective since $\overline\varphi\psi(a)=a$. It is surjective because 
\[
b \otimes c = b^{(1)}c^{(2)}=b\cdot_1 c^{(2)}=b\cdot_1(\Tr(c) (1 \otimes 1) - c^{(1)})=\Tr(c) b^{(1)}-(bc)^{(1)}
\]
for all $b, c \in A$, i.e., $G(A/R)$ is generated as an $R$-module by the image of elements of the form $a^{(1)}$ for $a \in A$.
\end{proof}

\begin{remark}
\label{rmk:quadratic alg already Galois}
Proposition \ref{prop:quadratic} generalizes the fact that a separable degree 2 field extension $L/K$ is already Galois and hence its Galois closure is $L$. Note that in the case of quadratic $R$-algebras, since $G(A/R)\simeq A$, the Galois closure inherits a ring structure.
\end{remark}

\subsection{Cubic algebras built from smaller degree algebras} \label{sec:cubicfromsmaller}
We give some examples of $G(A/R)$ where $A$ has degree $3$ but is the product of smaller degree algebras. These Galois closures may be easily computed using the product formula (Theorem \ref{thm:product-formula}), but in this section, we show how to work with them explicitly.

The simplest case of a decomposable degree $3$ $R$-algebra is $A = R \times R \times R$.  Each element $a = (r_1, r_2, r_3) \in A$ satisfies the polynomial
\begin{equation} \label{eq:deg3poly}
a^3 - t a^2 + s a - n 1_A = 0,
\end{equation}
where the {\em trace} $\Tr(a)$ is $s_{A,1}(a) = t = r_1 + r_2 + r_3$, the {\em spur} $\Spur(a)$ is $s_{A,2}(a) = s = r_1 r_2 + r_1 r_3 + r_2 r_3$, the {\em norm} $\No(a)$ is $s_{A,3}(a) = n = r_1 r_2 r_3$, and $1_A$ denotes the multiplicative identity element $(1,1,1)$ in $A$.

We claim that $G(A/R)$ is isomorphic to $R^{\oplus 6}$ as left $(A^{\otimes 3}*S_3)$-modules, where the left $(A^{\otimes 3}*S_3)$-module structure on $R^{\oplus 6}$ is given as follows. We index each of the six copies of $R$ in $R^{\oplus 6}$ by the six permutations of $\{1,2,3\}$.  The three actions of $(r_1, r_2, r_3) \in A$ on $(c_{ijk})_{\{i,j,k\}=\{1,2,3\}} \in R^{\oplus 6}$ are as follows:
\begin{align*}
(r_1,r_2,r_3) \cdot_1 (c_{ijk})_{\{i,j,k\}=\{1,2,3\}} &:= (r_i c_{ijk})_{\{i,j,k\}=\{1,2,3\}}\\
(r_1,r_2,r_3) \cdot_2 (c_{ijk})_{\{i,j,k\}=\{1,2,3\}} &:= (r_j c_{ijk})_{\{i,j,k\}=\{1,2,3\}}\\
(r_1,r_2,r_3) \cdot_3 (c_{ijk})_{\{i,j,k\}=\{1,2,3\}} &:= (r_k c_{ijk})_{\{i,j,k\}=\{1,2,3\}}.
\end{align*}
The action of $\sigma \in S_3$ on $R^{\oplus 6}$ is the standard action on the indices, i.e., $\sigma((c_{ijk})_{ijk}) = (c_{\sigma(i),\sigma(j),\sigma(k)})_{ijk}$.  Then we define the morphism $\varphi\colon A^{\otimes 3} \to R^{\oplus 6}$ of left $A^{\otimes 3}$-modules by linearly extending
$$\varphi( (b_1,b_2,b_3) \otimes (c_1,c_2,c_3) \otimes (d_1,d_2,d_3) ) = (b_i c_j d_k)_{\{i,j,k\}=\{1,2,3\}}.$$
It is easy to check that the image of $I_{A/R}$ is $0$ in $R^{\oplus 6}$, so we obtain an induced map $\overline{\varphi} \colon G(A/R) \to R^{\oplus 6}$.

\begin{proposition}
The map $\overline{\varphi}$ is an isomorphism of left $(A^{\otimes 3}*S_3)$-modules:
\[
G(A/R) \simeq R^{\oplus 6}.
\]
\end{proposition}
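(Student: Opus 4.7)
The plan is to show that $\overline{\varphi}$ is an isomorphism of $R$-modules, which will then promote to an isomorphism of left $(A^{\otimes 3}*S_3)$-modules since $\overline{\varphi}$ was constructed as a morphism of such. Let $\varepsilon_1,\varepsilon_2,\varepsilon_3\in A$ denote the standard orthogonal idempotents, so that $\{\varepsilon_i\otimes\varepsilon_j\otimes\varepsilon_k\}_{1\le i,j,k\le 3}$ is an $R$-basis of $A^{\otimes 3}$. Surjectivity is immediate: for each permutation $\sigma$ of $\{1,2,3\}$, the image $\varphi(\varepsilon_{\sigma(1)}\otimes\varepsilon_{\sigma(2)}\otimes\varepsilon_{\sigma(3)})$ is the standard basis vector of $R^{\oplus 6}$ indexed by $(\sigma(1),\sigma(2),\sigma(3))$, and these six vectors form a basis of $R^{\oplus 6}$.

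For injectivity, I would show that every basis tensor $\varepsilon_i\otimes\varepsilon_j\otimes\varepsilon_k$ with $\{i,j,k\}\ne\{1,2,3\}$ lies in $I_{A/R}$. For each $\ell\in\{1,2,3\}$, the characteristic polynomial $P_{A,\varepsilon_\ell}(T)=T^2(T-1)$ gives $s_{A,1}(\varepsilon_\ell)=1$ and $s_{A,2}(\varepsilon_\ell)=s_{A,3}(\varepsilon_\ell)=0$. The $e_3$-relation yields $\varepsilon_\ell\otimes\varepsilon_\ell\otimes\varepsilon_\ell=0$ in $G(A/R)$, handling the three triply-repeated basis tensors. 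For the doubly-repeated cases, I would start from the $e_2$-relation
\[
\varepsilon_\ell\otimes\varepsilon_\ell\otimes 1+\varepsilon_\ell\otimes 1\otimes\varepsilon_\ell+1\otimes\varepsilon_\ell\otimes\varepsilon_\ell=0\quad\text{in}\ G(A/R),
\]
and left-multiply by $\varepsilon_m\otimes 1\otimes 1$ for $m\ne\ell$; the orthogonality $\varepsilon_m\varepsilon_\ell=0$ kills the first two summands, leaving $\varepsilon_m\otimes\varepsilon_\ell\otimes\varepsilon_\ell=0$ in $G(A/R)$. Applying the $S_3$-action (which descends to $G(A/R)$ because the generators of $I_{A/R}$ are $S_3$-fixed) then moves the distinguished index to any of the three tensor positions, so all 18 doubly-repeated basis tensors vanish as well.

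Combining these cases, all 21 basis tensors of $A^{\otimes 3}$ with $\{i,j,k\}\ne\{1,2,3\}$ map to zero in $G(A/R)$. Hence $G(A/R)$ is generated as an $R$-module by the six images of $\varepsilon_{\sigma(1)}\otimes\varepsilon_{\sigma(2)}\otimes\varepsilon_{\sigma(3)}$ for $\sigma\in S_3$, and $\overline{\varphi}$ sends these to the six distinct standard basis vectors of $R^{\oplus 6}$, which are $R$-linearly independent. Therefore $\overline{\varphi}$ is an isomorphism of $R$-modules, and hence of left $(A^{\otimes 3}*S_3)$-modules. The main obstacle is the combinatorial bookkeeping of organizing the $27=3^3$ basis tensors into the three cases (all indices equal, exactly two equal, all distinct) and verifying that the $e_2$ and $e_3$ relations on the $\varepsilon_\ell$, combined with left multiplication by orthogonal idempotents, eliminate the first two cases; no deeper input is needed.
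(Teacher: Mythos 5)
Your proof is correct, but it takes a genuinely different route from the paper. The paper disposes of this proposition in one line, as the special case $B = R \times R$ of Proposition \ref{prop:k-B} on $G((R\times B)/R)$ for a quadratic algebra $B$, whose proof is a longer computation involving trace, norm, and conjugation relations in $B$. You instead argue directly in the split case using the orthogonal idempotents $\varepsilon_1,\varepsilon_2,\varepsilon_3$: the $e_3$-relation kills $\varepsilon_\ell\otimes\varepsilon_\ell\otimes\varepsilon_\ell$, left-multiplying the $e_2$-relation by $\varepsilon_m\otimes1\otimes1$ (legitimate since $I_{A/R}$ is a left ideal) kills $\varepsilon_m\otimes\varepsilon_\ell\otimes\varepsilon_\ell$, and the $S_3$-action (which indeed preserves $I_{A/R}$, as its generators are symmetric in the tensor positions) distributes this to all $21$ basis tensors whose indices are not a permutation of $\{1,2,3\}$; the remaining six generators map under $\overline\varphi$ to the six independent standard basis vectors of $R^{\oplus 6}$, which forces injectivity given surjectivity, and the $(A^{\otimes3}*S_3)$-linearity is inherited from $\varphi$. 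All the computations check out ($P_{A,\varepsilon_\ell}(T)=T^2(T-1)$, so $s_1(\varepsilon_\ell)=1$, $s_2(\varepsilon_\ell)=s_3(\varepsilon_\ell)=0$). Your idempotent argument is in fact a hands-on specialization of the proof of the general product formula (Theorem \ref{thm:product-formula}), where exactly this mechanism shows $I_{\underline{i}}=A_{\underline{i}}$ when the multiplicities do not match; what it buys is a short, self-contained verification for $R^3$ that avoids the quadratic-algebra bookkeeping of Proposition \ref{prop:k-B}, while the paper's route buys the non-split case $R\times B$ at the same time.
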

\begin{proof}
This is a special case of Proposition \ref{prop:k-B} below, with $B = R \times R$.
\end{proof}

We next consider the more general situation where $B$ is a quadratic $R$-algebra and let $A = R \times B$. Recall from \S \ref{sec:exquadalgs} that there is a trace form $\Tr_B$ and norm form $\No_B$ on $B$ such that $b \in B$ satisfies the quadratic polynomial \eqref{eq:monicpolyfordeg2}: $b^2 - \Tr_B(b) b + \No_B(b)=0$. 
Then by the definition of the product polynomial $P_{R\times B,(r,b)}$, an element $a = (r, b) \in A$ satisfies the cubic polynomial \eqref{eq:deg3poly}, where $t = \Tr(a) = r + \Tr_B(b)$, $s= \Spur(a) = r \Tr_B(b) + \No_B(b)$, $n = \No(a) = r \No_B(b)$, and $1_A = (1,1_B)$.  Recall as well from \S\ref{subsec:quadratic algebras} that for $b\in B$, we define $\overline{b}=\Tr_B(b)-b$.

We claim that the Galois closure $G(A/R)$ is isomorphic to $B^{\oplus 3}$ where we endow $B^{\oplus 3}$ with a left $(A^{\otimes 3}*S_3)$-module structure as follows. The three actions of $(r,c) \in A$ on $(b_1, b_2, b_3) \in B^{\oplus 3}$ are given by
\begin{align*}
(r,c)\cdot_1(b_1,b_2,b_3) &:=(r b_1, c b_2,b_3\overline{c}) \\
(r,c)\cdot_2(b_1,b_2,b_3) &:=(b_1\overline{c},r b_2,c b_3) \\
(r,c)\cdot_3(b_1,b_2,b_3) &:=(c b_1,b_2\overline{c},r b_3).
\end{align*}
The $S_3$-action on $B^{\oplus 3}$ is given by
\[
\sigma(b_1,b_2,b_3):=
\begin{cases}
(b_{\sigma^{-1}(1)},b_{\sigma^{-1}(2)},b_{\sigma^{-1}(3)}) & \textrm{if } \sgn(\sigma)=1\\
(\overline{b}_{\sigma^{-1}(1)},\overline{b}_{\sigma^{-1}(2)},\overline{b}_{\sigma^{-1}(3)}) & \textrm{if } \sgn(\sigma)=-1.
\end{cases}
\]
Then we may define a morphism $\varphi \colon A^{\otimes 3}\to B^{\oplus 3}$ of left $(A^{\otimes 3}*S_3)$-modules by linearly extending
\[
\varphi((r_1,b_1)\otimes(r_2,b_2)\otimes(r_3,b_3))=(r_1 b_3 \overline{b}_2,r_2 b_1\overline{b}_3, r_3 b_2\overline{b}_1).
\]
An easy check shows that $\varphi(I_{A/R})=0$, so we obtain an induced map $\overline\varphi \colon G(A/R) \to B^{\oplus 3}$.  We then have
\begin{proposition}
\label{prop:k-B}
The map $\overline\varphi$ is an isomorphism of left $(A^{\otimes 3}*S_3)$-modules:
\[
G(A/R) \simeq B^{\oplus 3}.
\]
\end{proposition}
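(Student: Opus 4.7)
The plan is to realize $\overline\varphi$ as an explicit surjection of $(A^{\otimes 3}*S_3)$-modules and then deduce injectivity via a rank comparison coming from the product formula (Theorem \ref{thm:product-formula}).

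First I would verify that $\varphi$ is $(A^{\otimes 3}*S_3)$-equivariant by direct computation on pure tensors, using that the conjugation on $B$ is an anti-involution (so $\overline{b_ib_j}=\overline{b_j}\,\overline{b_i}$) and that elements of $R$ are central. For example, applying $(r,c)\cdot_1$ to a pure tensor and then $\varphi$ produces $(rr_1 b_3\overline{b_2},\, r_2 c b_1\overline{b_3},\, r_3 b_2\overline{b_1}\,\overline{c})$, which matches the result of applying $(r,c)\cdot_1$ to $\varphi$ of the pure tensor on the target $B^{\oplus 3}$; the remaining $A$-actions and the $S_3$-action are handled analogously, with a single check on a transposition and a $3$-cycle.

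Next, by Proposition \ref{prop:I is generated by basis expressions}, it suffices to check that $\varphi$ kills the generators $e_j(a^{(1)}, a^{(2)}, a^{(3)}) - s_j(a)$ as $a$ ranges over an $R$-basis of $A$. Taking the basis $\{(1,0),\, (0,1_B),\, (0,\beta)\}$ where $\{1_B,\beta\}$ is an $R$-basis of $B$, each case reduces to a direct computation using the quadratic identities $b\overline{b} = \overline{b}b = \No_B(b)$ and $b + \overline{b} = \Tr_B(b)$, together with the product factorization $P_{A,(r,b)}(T) = (T-r)(T^2 - \Tr_B(b)T + \No_B(b))$ from Definition \ref{def:products of degree n Ralgs}. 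For surjectivity of $\overline\varphi$, observe that for any $b \in B$ the pure tensor $(1,0)\otimes(0,\overline{b})\otimes(0,1_B)$ maps to $(b,0,0)$ (using $\overline{\overline{b}}=b$), and analogous tensors produce $(0,b,0)$ and $(0,0,b)$; hence $\overline\varphi$ is onto.

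For injectivity, I would invoke Theorem \ref{thm:product-formula} with $A_1 = R$ viewed as the trivial degree $1$ algebra (see \S\ref{sec:extrivalgs}, where $G(R/R)\simeq R$) and $A_2 = B$, together with Proposition \ref{prop:quadratic} giving $G(B/R) \simeq B$. This yields
\[
G(A/R) \;\simeq\; (G(R/R) \otimes_R G(B/R))^{\binom{3}{1,2}} \;\simeq\; B^{\oplus 3}
\]
as $R$-modules, so $G(A/R)$ is free of rank $6$. Since $\overline\varphi$ is an $R$-linear surjection between two free $R$-modules of rank $6$, it is automatically an isomorphism (precomposing with any $R$-module isomorphism $B^{\oplus 3} \xrightarrow{\sim} G(A/R)$ produces a surjective endomorphism of a finitely generated module, which is necessarily injective). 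The main obstacle will be the routine but tedious bookkeeping in the ideal-annihilation step: because $B$ need not be commutative, one must track the order of multiplications and the anti-involution $\overline{(\cdot)}$ carefully when expanding the $e_j$'s on the basis elements.
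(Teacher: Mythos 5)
Your proposal is correct, but the injectivity half follows a genuinely different route from the paper. The paper never invokes Theorem \ref{thm:product-formula} here: after checking surjectivity of $\overline\varphi$ (with the same kind of explicit preimages you give), it constructs an explicit $R$-linear section $\psi\colon B^{\oplus 3}\to G(A/R)$ and proves $\psi$ is surjective by showing, through a hands-on computation with the trace and spur relations (including the identity $\No_B(b+1)-\No_B(b)-1=\Tr_B(b)$ and the reduction $(0,b)^{(2)}(0,b')^{(3)}=(0,1)^{(2)}(0,b'\overline{b})^{(3)}$), that $G(A/R)$ is generated as an $R$-module by elements of the form $(0,b)^{(2)}$, $(0,b)^{(3)}$, and $(0,1)^{(2)}(0,b)^{(3)}$. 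You instead pin down the abstract $R$-module structure of $G(A/R)$ via the product formula (with $A_1=R$ of degree $1$, $A_2=B$ of degree $2$, using Proposition \ref{prop:quadratic}) and conclude injectivity from the fact that a surjective endomorphism of a finitely generated module over a commutative ring is injective. This is legitimate and non-circular (the product formula and Proposition \ref{prop:quadratic} are proved earlier and independently), and it is shorter; what the paper's approach buys is a self-contained, explicit argument that also exhibits concrete $R$-module generators of $G(A/R)$, which is in the spirit of the section ("show how to work with them explicitly").

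One inaccuracy to fix: a degree $2$ $R$-algebra need not be free of rank $2$ over $R$ (quaternion algebras have rank $4$, and $R$ itself can carry a degree $2$ structure as in Example \ref{ex:triv-generic-polyn}), so "$G(A/R)$ is free of rank $6$" and your three-element basis $\{(1,0),(0,1_B),(0,\beta)\}$ are not available in general. This does not damage the argument: the product formula gives $G(A/R)\simeq B^{\oplus 3}$ as $R$-modules with $B$ free of finite rank, which is all your surjective-endomorphism step needs, and Proposition \ref{prop:I is generated by basis expressions} lets you run the ideal-annihilation check over any finite $R$-basis of $A$ (the computation for a general basis element is the same manipulation with $b\overline b=\No_B(b)$ and $b+\overline b=\Tr_B(b)$ that you describe).
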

\begin{proof}
We see that $\overline\varphi$ is surjective since $\varphi$ is: for every $b\in B$, we have
\begin{gather*}
\varphi((0,1)^{(2)}(0,b)^{(3)})=(b,0,0) \\
\varphi((0,\overline{b})^{(3)}-(0,1)^{(2)}(0,\overline{b})^{(3)})=(0,b,0) \\
\varphi((0,b)^{(2)}-(0,1)^{(2)}(0,\overline{b})^{(3)})=(0,0,b).
\end{gather*}

Since we know $\overline{\varphi}$ is a surjective map of left $(A^{\otimes 3}*S_3)$-modules, to prove it is an isomorphism, it is enough to show it is an isomorphism of $R$-modules. To do so, we need only find an $R$-module map $\psi\colon B^{\oplus 3}\to G(A/R)$ which is a surjective section of $\overline\varphi$. Then we can define $\psi$ by $\psi(b,0,0)=(0,1)^{(2)}(0,b)^{(3)}$, $\psi(0,b,0)=(0,\overline{b})^{(3)}-(0,1)^{(2)}(0,\overline{b})^{(3)}$, and $\psi(0,0,b)=(0,b)^{(2)}-(0,1)^{(2)}(0,\overline{b})^{(3)}$. It then suffices to show that $G(A/R)$ is generated as an $R$-module by elements of the form $(0,b)^{(2)}$, $(0,b)^{(3)}$, and $(0,1)^{(2)}(0,b)^{(3)}$.

For any element $\alpha \in A$, the trace relation $\alpha^{(1)} + \alpha^{(2)} + \alpha^{(3)} - \Tr(\alpha) \in I_{A/R}$ trivially shows that any element $\alpha^{(1)} = \alpha \otimes 1 \otimes 1 \in A^{\otimes 3}$ may be written in $G(A/R)$ as an $R$-linear combination of $1_A$, $\alpha^{(2)}$, and $\alpha^{(3)}$. In particular, any element of $G(A/R)$ may thus be written as a linear combination of elements of the form $\beta^{(2)} \gamma^{(3)}$ for $\beta, \gamma \in A$. Also, for $\beta = (r,b) \in R \times B = A$, we see that $\beta^{(i)} = r 1_A + (0,b-r)^{(i)}$, so the Galois closure $G(A/R)$ is generated as an $R$-module by $1_A$ and elements of the form $(0,b)^{(2)}$, $(0,b)^{(3)}$, and $(0,b)^{(2)} (0,b')^{(3)}$ for $b, b' \in B$.

We claim that for $b, b' \in B$, we can write $(0,b)^{(2)} (0,b')^{(3)}$ in the form $(0,1)^{(2)}(0,b'')^{(3)}$ with $b''\in B$; specifically, $b''=b'\overline{b}$. 
First, expanding the equation $(b+1)^2-\Tr_B(b+1)(b+1)-\No_B(b+1)=0$ yields
\begin{equation} \label{eq:Nb1vsNb}
\No_B(b+1) - \No_B(b) -1 = \Tr_B(b).
\end{equation}
Next, the trace and spur relations tell us
\begin{align*}
(0,b)^{(2)}(0,b)^{(3)}&=\No_B(b)-(0,b)^{(1)}(0,b)^{(2)}-(0,b)^{(1)}(0,b)^{(3)} = \No_B(b)+(0,b)\cdot_1((0,b)^{(1)}-\Tr_B(b))\\
&=\No_B(b) + (0,b^2-\Tr_B(b)b)^{(1)}=\No_B(b) - (0,\No_B(b))^{(1)}=\No_B(b)(1_A-(0,1)^{(1)})\\
&=\No_B(b)(-1_A+(0,1)^{(2)}+(0,1)^{(3)}),
\end{align*}
where the last equality holds because $\Tr_A(0,1)=\Tr_B(1_B)=2$. Applying this equation once to lefthand side and twice to the righthand side of
$$(0,b+1)^{(2)} (0,b+1)^{(3)} = (0,b)^{(2)} (0,b)^{(3)} + (0,b)^{(2)} (0,1)^{(3)} + (0,1)^{(2)} (0,b)^{(3)} + (0,1)^{(2)} (0,1)^{(3)}$$
and making use of \eqref{eq:Nb1vsNb} implies
\begin{equation} \label{eq:0b01}
(0,b)^{(2)} (0,1)^{(3)} + (0,1)^{(2)} (0,b)^{(3)} = - \Tr_B(b) (1_A + (0,1)^{(2)} + (0,1)^{(3)})
\end{equation}
which simplifies to
$$(0,b)^{(2)} (0,1)^{(3)} = -  \Tr_B(b) (1 \otimes (1,0) \otimes (1,0)) + (0,1)^{(2)} (0,\overline{b})^{(3)}.$$
Acting on the left by $(0,b')^{(3)}$ shows that $(0,b)^{(2)} (0,b')^{(3)} = (0,1)^{(2)} (0,b'\overline{b})^{(3)}$, i.e., we have shown that $(0,b)^{(2)} (0,b')^{(3)}$ is of the form $(0,1)^{(2)}(0,b'')^{(3)}$.

So far, we have shown that $G(A/R)$ is generated as an $R$-module by $1_A$ and elements of the form $(0,b)^{(2)}$, $(0,b)^{(3)}$, and $(0,1)^{(2)} (0,b')^{(3)}$. It remains to remove $1_A$ from our generating set. We have shown above that $(0,b)^{(2)}(0,b)^{(3)}=\No_B(b)(-1_A+(0,1)^{(2)}+(0,1)^{(3)})$. Substituting in $b = 1$ shows that $(0,1)^{(2)}(0,1)^{(3)} = -1_A + (0,1)^{(2)} + (0,1)^{(3)}$. Therefore, $1_A$ is also a linear combination of elements of the form $(0,b)^{(2)}$, $(0,b)^{(3)}$, and $(0,1)^{(2)}(0,b)^{(3)}$. This concludes the proof.
\end{proof}

\begin{remark}
\label{rmk:decomposable cubic ring str}
As a consequence of Proposition \ref{prop:k-B}, we see that $G(A/R)$ inherits a ring structure when $A=R\times B$ with $B$ a degree 2 $R$-algebra. This is not true more generally for indecomposable degree 3 $R$-algebras, e.g., for the case $A=\Mat_3(R)$ considered in \S\ref{subsec:endo-rings}.
\end{remark}

\subsection{Endomorphism rings (or matrix algebras)}
\label{subsec:endo-rings}
Let $V$ be a free rank $n$ module over $R$ and let $A=\End(V)$ be the ring of $R$-module endomorphisms. Then as in Example \ref{ex:matrix-alg-polyn}, we may view $A$ as a degree $n$ $R$-algebra where for each $\alpha\in A$, the polynomial $P_{A,\alpha}(T)$ is the characteristic polynomial of $\alpha$ viewed as an endomorphism of $V$; the trace and the norm of an endomorphism coincide with their usual definitions. 

Via the canonical isomorphism $A\simeq V\otimes V^*$, we have an isomorphism $A^{\otimes n} \simeq V^{\otimes n}\otimes (V^*)^{\otimes n}$. The natural left $(A^{\otimes n}*S_n)$-module structure on $A^{\otimes n}$ then induces such a structure on $V^{\otimes n}\otimes (V^*)^{\otimes n}$. Explicitly, $\sigma\in S_n$ acts on the pure tensors via
\[
\sigma(v_{1}\otimes\cdots\otimes v_{n}\otimes f_{1}\otimes\cdots\otimes f_{n}):=v_{{\sigma^{-1}(1)}}\otimes\cdots\otimes v_{{\sigma^{-1}(n)}}\otimes f_{{\sigma^{-1}(1)}}\otimes\cdots\otimes f_{{\sigma^{-1}(n)}}
\]
where $v_j \in V$ and $f_j \in V^*$. The $i$-th action of $\alpha \in A$ is given by acting on the $i$-th factor of $V$:
\[
\alpha\cdot_i(v_{1}\otimes\cdots\otimes v_{n}\otimes f_{1}\otimes\cdots\otimes f_{n}):=v_{1}\otimes\cdots\otimes\alpha(v_i)\otimes\cdots\otimes v_{n}\otimes f_{1}\otimes\cdots\otimes f_{n}.
\]
We next define a left $(A^{\otimes n}*S_n)$-module structure on $V^{\otimes n}\otimes \extp^n(V^*)$ as follows. For $\sigma\in S_n$, let 
\[
\sigma(v_{1}\otimes\cdots\otimes v_{n}\otimes (f_{1}\wedge\cdots\wedge f_{n})):=v_{{\sigma^{-1}(1)}}\otimes\cdots\otimes v_{{\sigma^{-1}(n)}}\otimes (f_{{\sigma^{-1}(1)}}\wedge\cdots\wedge f_{{\sigma^{-1}(n)}});
\]
in other words, for a form $\omega \in \extp^n(V^*)$, we have
\[
\sigma(v_{1}\otimes\cdots\otimes v_{n}\otimes \omega):=\sgn(\sigma)v_{{\sigma^{-1}(1)}}\otimes\cdots\otimes v_{{\sigma^{-1}(n)}}\otimes \omega.
\]
For $\alpha\in A$, the $i$-th $A$-action is given by
\[
\alpha\cdot_i(v_{1}\otimes\cdots\otimes v_{n}\otimes \omega):=v_{1}\otimes\cdots\otimes\alpha(v_i)\otimes\cdots\otimes v_{n}\otimes \omega.
\]

Let $\pi\colon (V^*)^{\otimes n}\to\extp^n(V^*)$ be the natural map $\pi(f_1\otimes\cdots\otimes f_n)=f_1\wedge\cdots\wedge f_n$. We then obtain a map of left $(A^{\otimes n}*S_n)$-modules. 
\begin{equation} \label{eq:phimatrixalg}
\varphi \colon A^{\otimes n}=V^{\otimes n}\otimes (V^*)^{\otimes n}\stackrel{\id\otimes\pi}{\longrightarrow}V^{\otimes n}\otimes \extp^n(V^*).
\end{equation}
Explicitly, if $u_1,\ldots,u_n$ is a basis of $V$, then for $\alpha_1, \ldots, \alpha_n \in A = \End(V)$, we compute
\begin{equation} \label{eq:phionendtensor}
\varphi(\alpha_1\otimes\cdots\otimes\alpha_n)=\sum_{\sigma\in S_n}\sgn(\sigma) \alpha_1(u_{\sigma(1)})\otimes\cdots\otimes\alpha_n(u_{\sigma(n)})\otimes(u_1^*\wedge\cdots\wedge u_n^*).
\end{equation}
We show that $\varphi$ induces an isomorphism between $G(A/R)$ and $V^{\otimes n}\otimes \extp^n(V^*)$. First, we show that we may find a fairly simple basis for $A$.

\begin{lemma}
\label{l:basis of diagonalizable matrices}
Let $V$ be a free rank $n$ $R$-module. Then the endomorphism ring $\End(V)$ has a basis $\beta_1,\ldots,\beta_{n^2}$ over $R$ where each $\beta_i$ is a linear operator that is diagonalizable over $R$.
\end{lemma}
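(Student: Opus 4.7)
The plan is to exhibit an explicit basis of $\End(V)\simeq\Mat_n(R)$ consisting of diagonalizable operators, by perturbing the standard matrix unit basis $\{E_{ij}\}$. The diagonal units $E_{ii}$ are already diagonal, so they cause no trouble; the issue is the off-diagonal units $E_{ij}$ for $i\neq j$, which are nilpotent and hence not diagonalizable over any ring in which $0\neq 1$. The natural move is to replace each such $E_{ij}$ by an $R$-linear combination with the $E_{ii}$ that is conjugate to a diagonal matrix by a matrix in $\GL_n(R)$, so that the change-of-basis from $\{E_{ij}\}$ to the new set is invertible and the diagonalization works integrally (i.e., over $\ZZ$, hence over any $R$).

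Concretely, first I would fix a basis of $V$ to identify $\End(V)$ with $\Mat_n(R)$, and then define
\[
\beta_{ii} := E_{ii}\quad (1\le i\le n),\qquad \beta_{ij} := E_{ii}-E_{ij}\quad (i\ne j).
\]
Since $E_{ij}=\beta_{ii}-\beta_{ij}$ for $i\ne j$ and $E_{ii}=\beta_{ii}$, the $n^2$ elements $\{\beta_{ij}\}$ span the same $R$-module as $\{E_{ij}\}$, so they form an $R$-basis of $\Mat_n(R)$. This handles the basis condition.

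Next I would check diagonalizability one family at a time. Each $\beta_{ii}=E_{ii}$ is literally diagonal. For the mixed terms, the key computation is that conjugation by the elementary unipotent $P:=I+E_{ij}\in\GL_n(R)$, whose inverse is $P^{-1}=I-E_{ij}$ (because $E_{ij}^2=0$ when $i\ne j$), gives
\[
P^{-1}\beta_{ij}P = (I-E_{ij})(E_{ii}-E_{ij})(I+E_{ij}) = E_{ii},
\]
using $E_{ii}E_{ij}=E_{ij}$, $E_{ij}E_{ii}=0$, and $E_{ij}E_{ij}=0$. Thus $\beta_{ij}$ is similar over $R$ to the diagonal matrix $E_{ii}$.

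There is essentially no serious obstacle here: the approach is entirely self-contained, works over any commutative ring $R$ (in particular with no assumption on the characteristic), and requires only the elementary identities among matrix units together with the fact that $I\pm E_{ij}$ lies in $\GL_n(R)$ for $i\ne j$. Assembling the $\beta_{ij}$ into the claimed basis then proves the lemma.
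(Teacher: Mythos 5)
Your proof is correct and follows essentially the same strategy as the paper: keep the diagonal matrix units and replace each off-diagonal unit $E_{ij}$ by a combination with a diagonal unit ($E_{ii}-E_{ij}$ in your case, $e_{ii}+e_{ij}$ in the paper's) that is explicitly conjugate over $\GL_n(R)$ (indeed over $\GL_n(\ZZ)$) to $E_{ii}$, the only cosmetic difference being the choice of conjugating matrix (your unipotent $I+E_{ij}$ versus the paper's involution $e_{ii}+e_{ij}-e_{jj}+\sum_{k\neq i,j}e_{kk}$). The computation and the change-of-basis argument both check out.
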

\begin{proof}
After choosing an $R$-basis for $V$, we have an isomorphism $\End(V)\simeq \Mat_n(R)$. Let $e_{ij}$ denote the matrix whose entries are all $0$ except for a $1$ in the $(i,j)$-th position. Then for $i \neq j$, the matrix $e_{ij} + e_{ii}$ is diagonalizable: if $P = e_{ii} + e_{ij} - e_{jj} + \sum_{k \neq i,j} e_{kk}$, then $P^{-1} = P$ and $P^{-1} (e_{ij} + e_{ii})P = e_{ii}$. So, we can choose our desired basis to be the $n$ elements $e_{11},\ldots,e_{nn}$ as well as the $n^2-n$ elements $e_{ij} + e_{ii}$ and $e_{ji} + e_{jj}$ with $i<j$.
\end{proof}

\begin{theorem}
\label{thm:gcforEndV}
Let $V$ be a free rank $n$ $R$-module and $A = \End(V)$. The map $\varphi$ of \eqref{eq:phimatrixalg} induces an isomorphism
\[
G(A/R)\simeq V^{\otimes n}\otimes \extp^n(V^*)
\]
of left $(A^{\otimes n}*S_n)$-modules.
\end{theorem}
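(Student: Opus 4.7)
My plan is to verify that the induced map $\overline{\varphi}\colon G(A/R)\to V^{\otimes n}\otimes\extp^n(V^*)$ is a well-defined morphism of left $(A^{\otimes n}*S_n)$-modules, then construct an explicit $R$-linear section $\psi$ and prove $\psi$ is surjective; this forces $\overline{\varphi}$ to be an isomorphism.

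First I would verify $\varphi(I_{A/R})=0$. By Proposition~\ref{prop:I is generated by basis expressions} and Lemma~\ref{l:basis of diagonalizable matrices}, it suffices to check $\varphi(e_j(\alpha^{(1)},\ldots,\alpha^{(n)})-s_j(\alpha))=0$ for $\alpha$ in a basis of diagonalizable elements. For such an $\alpha$, choose the $u_i$ to be an eigenbasis with eigenvalues $\lambda_1,\ldots,\lambda_n$; then a direct expansion of \eqref{eq:phionendtensor} gives $\varphi(e_j(\alpha^{(1)},\ldots,\alpha^{(n)}))=e_j(\lambda_1,\ldots,\lambda_n)\cdot\varphi(1^{\otimes n})=\varphi(s_j(\alpha)\cdot 1^{\otimes n})$, since $\sum_{|S|=j}\prod_{k\in S}\lambda_{\sigma(k)}=e_j(\lambda_1,\ldots,\lambda_n)$ is symmetric in $\sigma$. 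The $S_n$-equivariance is built into the sign action on $\extp^n(V^*)$. Next, with the basis $u_1,\ldots,u_n$ fixed, define $\psi(u_{i_1}\otimes\cdots\otimes u_{i_n}\otimes(u_1^*\wedge\cdots\wedge u_n^*)):=[e_{i_1,1}\otimes e_{i_2,2}\otimes\cdots\otimes e_{i_n,n}]$, where $e_{i,j}\in\Mat_n(R)$ is the matrix unit with a $1$ in position $(i,j)$. Since $e_{i_l,l}(u_{\sigma(l)})=\delta_{\sigma(l),l}u_{i_l}$, only $\sigma=\id$ contributes in \eqref{eq:phionendtensor}, giving $\overline{\varphi}\circ\psi=\id$.

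The main obstacle is proving $\psi$ is surjective, i.e., that modulo $I_{A/R}$ every monomial $y=e_{i_1,j_1}\otimes\cdots\otimes e_{i_n,j_n}$ is an $R$-linear combination of the images of $\psi$. \emph{Case 1:} if $(j_1,\ldots,j_n)$ is not a permutation of $\{1,\ldots,n\}$, pick some missing $j$. The element $\sum_l e_{jj}^{(l)}-1$ lies in $I_{A/R}$ (from $\alpha=e_{jj}$ with $s_1(\alpha)=1$), so $y\cdot(\sum_l e_{jj}^{(l)}-1)\in I_{A/R}$; but $e_{i_l,j_l}\cdot e_{jj}=\delta_{j_l,j}\,e_{i_l,j}=0$ for every $l$ since $j_l\neq j$, so this simplifies to $-y\in I_{A/R}$. \emph{Case 2:} if $(j_1,\ldots,j_n)=\sigma\in S_n$, set $y_\sigma:=y$ and $\sigma':=\sigma\cdot(a,a+1)$; I claim $y_\sigma+y_{\sigma'}\in I_{A/R}$, whence induction on the number of adjacent transpositions needed to reduce $\sigma$ to $\id$ gives $y_\sigma\equiv\sgn(\sigma)\,e_{i_1,1}\otimes\cdots\otimes e_{i_n,n}$ modulo $I_{A/R}$. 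To prove the claim, form the non-permutation monomial $y'$ by replacing $\sigma(a)$ at position $a$ of $y_\sigma$ with $\sigma(a+1)$, and left-multiply the relation $\sum_l e_{\sigma(a+1),\sigma(a)}^{(l)}\in I_{A/R}$ (from $\alpha=e_{\sigma(a+1),\sigma(a)}$ with all $s_l(\alpha)=0$) by $y'$. Using $e_{i_l,j'_l}\cdot e_{\sigma(a+1),\sigma(a)}=\delta_{j'_l,\sigma(a+1)}\,e_{i_l,\sigma(a)}$, only positions $l\in\{a,a+1\}$ contribute, and the two resulting monomials are precisely $y_\sigma$ and $y_{\sigma'}$.

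Cases 1 and 2 together show $\psi$ is surjective, so combined with $\overline{\varphi}\circ\psi=\id$ we conclude $\overline{\varphi}$ is an isomorphism of left $(A^{\otimes n}*S_n)$-modules.
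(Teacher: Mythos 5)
Your argument is correct, and its overall skeleton matches the paper's proof: you kill $I_{A/R}$ under $\varphi$ by reducing, via Proposition \ref{prop:I is generated by basis expressions} and Lemma \ref{l:basis of diagonalizable matrices}, to diagonalizable basis elements and computing with eigenvalues; you then define the same section $\psi(u_{i_1}\otimes\cdots\otimes u_{i_n}\otimes\omega)=(e_{i_1 1})^{(1)}\cdots(e_{i_n n})^{(n)}$, check $\overline{\varphi}\circ\psi=\id$ via \eqref{eq:phionendtensor}, and conclude by showing these elements generate $G(A/R)$. Where you genuinely diverge is in the proof of the reduction statement \eqref{eq:eproducttau}. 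The paper proves it in one stroke with the degree-$n$ relation: setting $N_\tau=\sum_j e_{j\tau(j)}$, the relation $(N_\tau)^{(1)}\cdots(N_\tau)^{(n)}=\det(N_\tau)$ in $G(A/R)$ together with $e_{i_k k}N_\tau=e_{i_k\tau(k)}$ gives simultaneously the vanishing for non-permutation column patterns ($\det N_\tau=0$) and the sign $\sgn(\tau)$ for permutations. You instead use only the linear (trace) relations of $I_{A/R}$: for a non-permutation pattern, left-multiplying $\sum_l e_{jj}^{(l)}-1\in I_{A/R}$ (with $j$ a missing column) by the monomial kills it, and for a permutation pattern you left-multiply $\sum_l e_{\sigma(a+1),\sigma(a)}^{(l)}\in I_{A/R}$ by a judiciously modified monomial to obtain the adjacent-transposition identity $y_\sigma\equiv -y_{\sigma(a,a+1)}$, then induct on the length of $\sigma$; I checked both manipulations (the left-ideal property and the $\delta$-computations with matrix units) and they are sound, including the fact that only positions $a,a+1$ survive because $\sigma$ is injective. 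The trade-off: the paper's $N_\tau$ trick is shorter and handles both cases uniformly, while your argument is more elementary in that it consumes only the $e_1$-relations of $I_{A/R}$ for this step (the higher symmetric-function relations enter only through $\varphi(I_{A/R})=0$), which makes more transparent exactly how little of the ideal is needed to force the normal form, at the cost of a case split and an induction over adjacent transpositions.
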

\begin{proof}
We begin by showing that $\varphi(I_{A/R})=0$. That is, we show that for all $\alpha\in A$,
\[
\varphi(\sum_{i_1<\cdots<i_r}\alpha^{(i_1)}\cdots\alpha^{(i_r)})=\varphi(s_{A,r}(\alpha)).
\]
By Proposition \ref{prop:I is generated by basis expressions}, it suffices to prove this as $\alpha$ ranges over a basis of $A$ over $R$. Combining this with Lemma \ref{l:basis of diagonalizable matrices}, we may assume that $\alpha$ is diagonalizable over $R$. Choose a basis $u_1,\ldots, u_n$ of $V$ such that $\alpha(u_i)=\lambda_i u_i$ for $\lambda_i \in R$. Let $\omega=u^*_1\wedge\cdots\wedge u^*_n$. From \eqref{eq:phionendtensor}, we see
\[
\varphi(\alpha^{(i_1)}\cdots\alpha^{(i_r)})=\sum_{\sigma\in S_n}\lambda_{\sigma(i_1)}\cdots\lambda_{\sigma(i_r)}\sgn(\sigma)u_{\sigma(1)}\otimes\cdots\otimes u_{\sigma(n)}\otimes\omega.
\]
Since 
\[
s_{A,r}(\alpha)=\sum_{i_1<\cdots<i_r}\lambda_{i_1}\cdots\lambda_{i_r},
\]
we obtain
\begin{align*}
\varphi(\sum_{i_1<\cdots<i_r}\alpha^{(i_1)}\cdots\alpha^{(i_r)})&=\sum_{\sigma\in S_n}\sum_{i_1<\cdots<i_r}\lambda_{\sigma(i_1)}\cdots\lambda_{\sigma(i_r)}\sgn(\sigma)u_{\sigma(1)}\otimes\cdots\otimes u_{\sigma(n)}\otimes\omega \\
&=\sum_{\sigma\in S_n}\sum_{i_1<\cdots<i_r}\lambda_{i_1}\cdots\lambda_{i_r}\sgn(\sigma)u_{\sigma(1)}\otimes\cdots\otimes u_{\sigma(n)}\otimes\omega \\
&=s_{A,r}(\alpha)\sum_{\sigma\in S_n}\sgn(\sigma)u_{\sigma(1)}\otimes\cdots\otimes u_{\sigma(n)}\otimes\omega=\varphi(s_{A,r}(\alpha)),
\end{align*}
where the last equality again follows from \eqref{eq:phionendtensor}.

Having now shown that $\varphi(I_{A/R})=0$, we obtain an induced map $\overline{\varphi}\colon G(A/R)\to V^{\otimes n} \otimes \extp^n(V^*)$. Since $\pi$ is surjective, $\overline{\varphi}$ is as well. Since $\overline{\varphi}$ is a map of $(A^{\otimes n}*S_n)$-modules, to prove it is an isomorphism, it is enough to show it is an isomorphism of $R$-modules. To do so, we construct a surjective $R$-module map which is a section of $\overline{\varphi}$.

After choosing a basis of $V$, we may identify $A$ with $\Mat_n(R)$ and use the notation $e_{ij}$ to indicate a matrix that is $0$ in all entries except $1$ in the $(i,j)$-th position. It is clear that $G(A/R)$ is generated by (the image of) the elements $(e_{i_1j_1})^{(1)}\cdots(e_{i_nj_n})^{(n)}$ for $1 \leq i_k, j_k \leq n$.  We claim that for every function $\tau\colon\{1,2,\ldots,n\}\to \{1,2,\ldots,n\}$, we have the following equality in $G(A/R)$:
\begin{equation} \label{eq:eproducttau}
(e_{i_1\tau(1)})^{(1)}\cdots(e_{i_n\tau(n)})^{(n)}=
\begin{cases}
\sgn(\tau)\,(e_{i_1 1})^{(1)}\cdots(e_{i_n n})^{(n)} & \textrm{for } \tau\in S_n\\
0, & \textrm{for } \tau\notin S_n
\end{cases}
\end{equation}
Let $N_\tau=\sum_j e_{j\tau(j)}$. Then because $e_{i_k k} N_\tau = e_{i_k \tau(k)}$ and
$(N_\tau)^{(1)}\cdots(N_\tau)^{(n)}=\det(N_\tau)$ in $G(A/R)$, we have
$$
(e_{i_1 \tau(1)})^{(1)} \cdots (e_{i_n \tau(n)})^{(n)}
= e_{i_1 1}\cdot_1\cdots e_{i_n n}\cdot_n(N_\tau)^{(1)} \cdots (N_\tau)^{(n)}
= \det(N_\tau)(e_{i_1 1})^{(1)} \cdots (e_{i_n n})^{(n)}.
$$
Since $\det(N_\tau)$ vanishes for $\tau\notin S_n$ and is equal to $\sgn(\tau)$ for $\tau\in S_n$, we have \eqref{eq:eproducttau}. Thus, $G(A/R)$ is generated by the elements $(e_{i_1 1})^{(1)}\cdots(e_{i_n n})^{(n)}$ for $1 \leq i_k \leq n$.

Next, notice that $V^{\otimes n} \otimes \extp^n(V^*)$ is a free $R$-module of rank $n^n$ with basis $u_{i_1}\otimes\dots\otimes u_{i_n}\otimes\omega$, where $1\leq i_j\leq n$ and $\omega=u_1\wedge\cdots\wedge u_n$. We can therefore define an $R$-module map $\psi\colon V^{\otimes n} \otimes \extp^n(V^*)\to G(A/R)$ by
\[
\psi(u_{i_1}\otimes\dots\otimes u_{i_n}\otimes\omega)=(e_{i_1 1})^{(1)}\cdots(e_{i_n n})^{(n)}.
\]
By \eqref{eq:phionendtensor}, we see
\[
\overline\varphi((e_{i_1 1})^{(1)}\cdots(e_{i_n n})^{(n)})=\sum_{\sigma\in S_n}\sgn(\sigma)e_{i_11}(u_{\sigma(1)})\otimes\dots\otimes e_{i_nn}(u_{\sigma(n)})\otimes\omega=u_{i_1}\otimes\dots\otimes u_{i_n}\otimes\omega
\]
and so $\psi$ is a section of $\overline\varphi$. Since the $(e_{i_1 1})^{(1)}\cdots(e_{i_n n})^{(n)}$ generate $G(A/R)$ as an $R$-module, it follows that $\psi$ is surjective, hence an isomorphism of $R$-modules. Therefore, $\overline\varphi$ is an isomorphism as well.
\end{proof}

\subsection{Central simple algebras}
\label{subsec:csa}
We recall some basic facts about central simple algebras. For a field $F$, let $\Fsep$ denote its separable closure. If $\A$ is a central simple algebra of dimension $n^2$ over $F$, then there exists an $\Fsep$-vector space $V$ of dimension $n$ and an $\Fsep$-algebra isomorphism $\A_{\Fsep}:=\A\otimes_F \Fsep\simeq\End(V)$. Since the Galois group $G$ of $\Fsep/F$ acts continuously on $\A_{\Fsep}$, we obtain an induced action on $\End(V)$. In particular, Galois descent implies that giving a central simple algebra of dimension $n^2$ over $F$ is equivalent to giving a continuous $G$-action on $\End(V)$ over $\Fsep$.

In what follows, we endow $\A$ with the degree $n$ $F$-algebra structure from Example \ref{ex:central-simple-alg-polyn}, namely choose a finite Galois extension $K/F$, a $K$-algebra isomorphism $\iota'\colon\A\otimes_F K\stackrel{\simeq}{\longrightarrow}\Mat_n(K)$, and let $\iota$ be the embedding map $\A\to\Mat_n(K)$. Lemma \ref{l:base changing the degree n alg structure} tells us that $\A\otimes_F K$ inherits a degree $n$ $K$-algebra structure. As we will see momentarily, this is {\em not} the degree $n$ $K$-algebra structure on $\Mat_n(K)$ defined in Example \ref{ex:matrix-alg-polyn}; nonetheless, $\A\otimes_F K$ and $\Mat_n(K)$ do have isomorphic Galois closures, as we will see in the course of proving the following result.

\begin{lemma}
\label{l:CSA base extension noniso degn but same gc}
We have
\[
G(\A/F)\otimes_F \Fsep\simeq G(\A_{\Fsep}/\Fsep)\simeq V^{\otimes n}\otimes\extp^nV^*.
\]
\end{lemma}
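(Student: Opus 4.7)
The proof rests on chaining the base change theorem with the computation of the Galois closure of an endomorphism ring. First, Theorem \ref{thm:base change} applied with $S=\Fsep$ yields immediately
\[
G(\A/F)\otimes_F\Fsep\simeq G(\A_{\Fsep}/\Fsep),
\]
where by Lemma \ref{l:base changing the degree n alg structure} the degree $n$ $\Fsep$-algebra structure on $\A_{\Fsep}$ is the one induced from the base-changed embedding $\iota\otimes 1\colon\A_{\Fsep}\to\Mat_n(K\otimes_F\Fsep)$, with $K$ a finite Galois splitting field for $\A$.

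For the second isomorphism, fix an $\Fsep$-algebra isomorphism $\A_{\Fsep}\simeq\End(V)$. Under this identification, $\End(V)$ a priori carries two degree $n$ $\Fsep$-algebra structures: the base-changed one above, and the tautological one coming from Example \ref{ex:matrix-alg-polyn}. The plan is to show that these two structures define the same ideal $I_{\A_{\Fsep}/\Fsep}\subseteq\A_{\Fsep}^{\otimes n}$, and hence the same Galois closure, so that Theorem \ref{thm:gcforEndV} can be applied. Since the defining generators of $I_{A/R}$ depend on the degree $n$ structure only through the coefficients $s_{A,j}(a)$ of the characteristic polynomials $P_{A,a}(T)$, it suffices to verify that both structures produce the same polynomial $P_{A,a}(T)$ for every $a\in\A_{\Fsep}$.

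This comparison is the one potential subtlety, but it is essentially standard: both polynomials coincide with the reduced characteristic polynomial of $a$ as an element of the central simple $\Fsep$-algebra $\A_{\Fsep}$. For the base-changed structure this is exactly the content of Example \ref{ex:central-simple-alg-polyn}, since base changing $K\otimes_F\Fsep$ further to any splitting field recovers the reduced characteristic polynomial. For the matrix-algebra structure on $\End(V)$, $P_{\End(V),a}(T)$ is by definition the usual characteristic polynomial of $a$ as a linear operator on $V$, which is the reduced characteristic polynomial in the split algebra $\End(V)\simeq\Mat_n(\Fsep)$. The two ideals therefore coincide, giving $G(\A_{\Fsep}/\Fsep)\simeq G(\End(V)/\Fsep)$, and Theorem \ref{thm:gcforEndV} delivers the final isomorphism $G(\End(V)/\Fsep)\simeq V^{\otimes n}\otimes\extp^nV^*$.
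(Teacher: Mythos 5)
Your proof is correct and follows the same skeleton as the paper's: base change (Theorem \ref{thm:base change}) to reduce to $\Fsep$ (the paper reduces to the finite splitting field $K$), then a comparison of the two degree $n$ structures via their characteristic polynomials, then Theorem \ref{thm:gcforEndV}. The one place where you genuinely diverge is the middle step. The paper avoids any appeal to the invariance of the reduced characteristic polynomial: it proves the explicit identity $\eta\circ(\iota\otimes1)=\iota'$, where $\eta\colon\Mat_n(K\otimes_FK)\to\Mat_n(K)$ comes from the multiplication map $K\otimes_FK\to K$, and then quotes Remark \ref{rmk:Galois closure depends weakly on Rprime} to conclude that the base-changed structure and the matrix-algebra structure have identical characteristic polynomials, hence identical ideals. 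Your route instead identifies both polynomials with the reduced characteristic polynomial of $\A_{\Fsep}$, which implicitly uses that the reduced characteristic polynomial is independent of the chosen splitting (Skolem--Noether); that is standard and fine, but your citation of Example \ref{ex:central-simple-alg-polyn} is slightly off target, since that example concerns elements of $\A$ over $F$, not elements of $\A_{\Fsep}$ under the embedding $\iota\otimes1$ into $\Mat_n(K\otimes_F\Fsep)$. To make your step airtight, compose $\iota\otimes1$ with a projection $K\otimes_F\Fsep\to\Fsep$: since the coefficients already lie in $\Fsep$, this does not change the polynomial, and the composite is a splitting of $\A_{\Fsep}$, so the polynomial is indeed the reduced (equivalently, usual) characteristic polynomial on $\End(V)$ --- which is exactly the role the paper's $\eta$ plays, only carried out over $K$ and for the specific splitting $\iota'$ so that no invariance statement is needed. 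Your observation that the ideal $I_{\A_{\Fsep}/\Fsep}$ depends only on the coefficients $s_j$ is a clean way to finish, and slightly more direct than invoking Remark \ref{rmk:Galois closure depends weakly on Rprime}.
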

\begin{proof}
Throughout the proof, we endow $\A\otimes_FK$ with its degree $n$ $K$-algebra structure coming from Lemma \ref{l:base changing the degree n alg structure}. By the base change theorem \ref{thm:base change}, it is enough to show that if we endow $\Mat_n(K)$ with the degree $n$ $K$-algebra structure coming from Example \ref{ex:matrix-alg-polyn}, then $\iota'\colon\A\otimes_FK\to\Mat_n(K)$ induces an isomorphism of Galois closures.

Recall that the degree $n$ $K$-algebra structure on $\A\otimes_FK$ is induced from the map $\iota\otimes1\colon\A\otimes_FK\to\Mat_n(K)\otimes_F K\simeq\Mat_n(K\otimes_FK)$. Let $\eta\colon\Mat_n(K\otimes_FK)\to\Mat_n(K)$ be the map induced from the $K$-algebra morphism $K\otimes_FK\to K$ sending $\alpha\otimes\beta$ to $\alpha\beta$. We claim that $\eta\circ(\iota\otimes1)=\iota'$. Indeed, let $a\in\A$, $\alpha\in K$, and $\iota'(a)=(Q_{ij})\in\Mat_n(K)$. Since $\iota'$ is a $K$-algebra map, we have $\iota'(a\otimes\alpha)=(Q_{ij}\alpha)$. On the other hand, $(\iota\otimes1)(a\otimes\alpha)=(Q_{ij})\otimes\alpha\in\Mat_n(K)\otimes_FK$ which is identified with the matrix $(Q_{ij}\otimes\alpha)\in\Mat_n(K\otimes_FK)$. This maps under $\eta$ to $(Q_{ij}\alpha)=\iota'(a\otimes\alpha)$, which proves our claim. It then follows immediately from Remark \ref{rmk:Galois closure depends weakly on Rprime} that $\iota'$ induces an isomorphism of Galois closures.
\end{proof}

In light of Lemma \ref{l:CSA base extension noniso degn but same gc}, by Galois descent, $G(\A/F)$ determines and is determined by a continuous $G$-action on $V^{\otimes n}\otimes\extp^n(V^*)$. The following result shows how to obtain this $G$-action in terms of the one on $\End(V)$, i.e., how to determine $G(\A/F)$ in terms of $\A$.

\begin{proposition}
\label{l:galois-action-on-galois-closure-of-EndV}
With notation as above, let $\varphi\colon G\to\aut(\End(V))$ be the continuous Galois action corresponding to the central simple algebra $\A$, and let $\pi\colon V^{\otimes n}\otimes (V^*)^{\otimes n}\to V^{\otimes n}\otimes \bigwedge^n (V^*)$ be the natural projection. Then for every $\sigma\in G$, the automorphism $\varphi(\sigma)^{\otimes n}$ of $\End(V)^{\otimes n} \simeq V^{\otimes n}\otimes (V^*)^{\otimes n}$ preserves $\ker\pi$, thereby inducing an automorphism $\psi(\sigma)$ of $V^{\otimes n}\otimes \bigwedge^n (V^*)$. The resulting map $\psi\colon G\to\aut(V^{\otimes n}\otimes \bigwedge^n (V^*))$ gives the Galois action corresponding to $G(\A/F)$.
\end{proposition}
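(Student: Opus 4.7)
The plan is to combine the base change theorem (Theorem \ref{thm:base change}) with Theorem \ref{thm:gcforEndV} and then trace the continuous $G$-action carefully through the resulting chain of identifications.

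First, I would apply Theorem \ref{thm:base change} with $S=\Fsep$ to obtain a canonical isomorphism $G(\A/F)\otimes_F\Fsep\simeq G(\A_{\Fsep}/\Fsep)$, so that by Galois descent the $F$-module $G(\A/F)$ is recovered from $G(\A_{\Fsep}/\Fsep)$ together with the semilinear $G$-action induced from the action on the second tensor factor of $\A^{\otimes n}\otimes_F\Fsep$. Using the $\Fsep$-algebra isomorphism $\A_{\Fsep}\simeq\End(V)$ corresponding to $\varphi$, this $G$-action on $\A^{\otimes n}\otimes_F\Fsep\simeq\End(V)^{\otimes n}$ is precisely $\varphi(\sigma)^{\otimes n}$ by functoriality of the tensor product.

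Next, I would observe that $I_{\A/F}\subset\A^{\otimes n}$ is an $F$-submodule, so its extension $I_{\A/F}\otimes_F\Fsep$ inside $\End(V)^{\otimes n}$ is automatically stable under $\varphi(\sigma)^{\otimes n}$. Theorem \ref{thm:base change} further identifies this extended ideal with $I_{\End(V)/\Fsep}$, and Theorem \ref{thm:gcforEndV} identifies $I_{\End(V)/\Fsep}$ with $\ker\pi$: the map there denoted $\varphi$ is exactly the present $\pi$, and the induced map on the quotient is an isomorphism, so its kernel on the whole of $\End(V)^{\otimes n}$ is the defining ideal. Consequently $\varphi(\sigma)^{\otimes n}$ preserves $\ker\pi$, producing the claimed induced automorphism $\psi(\sigma)$ on $V^{\otimes n}\otimes\extp^n(V^*)$.

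Finally, tracing back through the identifications shows that $\psi$ is the Galois descent datum corresponding to $G(\A/F)$. The main obstacle is just bookkeeping: I would need to confirm that the Galois action on the second tensor factor of $\A^{\otimes n}\otimes_F\Fsep$ really does correspond, via $\A_{\Fsep}\simeq\End(V)$, to $\varphi(\sigma)^{\otimes n}$ on $\End(V)^{\otimes n}$, and that the identification of Lemma \ref{l:CSA base extension noniso degn but same gc} is $G$-equivariant at each stage. No serious new idea is needed beyond the base change theorem and the explicit description of $G(\End(V)/\Fsep)$ from Theorem \ref{thm:gcforEndV}.
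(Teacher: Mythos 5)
Your proposal is correct and follows essentially the same route as the paper: base change (Theorem \ref{thm:base change}) makes the surjection $\A_{\Fsep}^{\otimes n}\to G(\A_{\Fsep}/\Fsep)$ the extension of $\A^{\otimes n}\to G(\A/F)$, so the descent datum on $G(\A_{\Fsep}/\Fsep)$ is induced from $\varphi(\sigma)^{\otimes n}$ on $\A_{\Fsep}^{\otimes n}$, and the kernel is $\ker\pi$ by Theorem \ref{thm:gcforEndV} together with Lemma \ref{l:CSA base extension noniso degn but same gc}. The bookkeeping you flag at the end is exactly what the paper's short proof carries out, so no further ideas are needed.
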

\begin{proof}
By Theorem \ref{thm:base change}, we know that tensoring the surjection $\A\to G(\A/F)$ with $\Fsep$ yields the surjection $\A_{\Fsep}^{\otimes n}\to G(\A_{\Fsep}/\Fsep)$. As a result, the continuous $G$-action on $G(\A_{\Fsep}/\Fsep)$ is induced from that on $\A_{\Fsep}^{\otimes n}$. The $G$-action on $\A_{\Fsep}^{\otimes n}$ is nothing more than the one obtained from $\A_{\Fsep}$ by tensoring $n$ times, i.e., $\sigma\in G$ acts on $\A_{\Fsep}^{\otimes n}$ by $\varphi(\sigma)^{\otimes n}$. Finally, as noted above, the $G$-action on $G(\A_{\Fsep}/\Fsep)$ is induced from that on $\A_{\Fsep}^{\otimes n}$. Hence, each $\varphi(\sigma)^{\otimes n}$ preserves $I_{\A_{\Fsep}/\Fsep}=\ker\pi$ and the resulting action on $G(\A_{\Fsep}/\Fsep)$ is given by $\psi(\sigma)$.
\end{proof}

Let us next consider a large class of explicit examples which includes all central simple algebras over number fields (see, e.g., \cite[Chapter 15]{pierce} for further details). Let $K/F$ be a Galois extension having a cyclic Galois group of order $n$ with generator $\sigma$. For $\gamma \in F^*$, one may define an associative $F$-algebra
\begin{equation} \label{eq:cyclicalg}
\A(K/F,\sigma,\gamma) := (K\oplus uK\oplus\cdots u^{n-1}K) / (u^n=\gamma; \alpha u=u\sigma(\alpha)\,\forall\,\alpha \in K).
\end{equation}
The Albert--Brauer--Hasse--Noether theorem combined with the Grunwald--Wang theorem implies that if $F$ is a number field, then every central simple algebra over $F$ is a cyclic algebra, that is, of the form \eqref{eq:cyclicalg}. If $\gamma \in N_{K/F}(K^*)$, then $\A(K/F,\sigma,\gamma)$ is isomorphic to $\Mat_n(F)$. If $\gamma \in F^*$ modulo $N_{K/F}(K^*)$ has order exactly $n$ (that is, $\gamma^n\in N_{K/F}(K^*)$ and $\gamma^d\notin N_{K/F}(K^*)$ for all $d$ dividing $n$), then $\A(K/F,\sigma,\gamma)$ is a division algebra. 

Now fix a central simple algebra $\A$ over $F$ of the form \eqref{eq:cyclicalg}. Since $K$ splits $\A$, we have an injection of $\A$ into $\A \otimes_F K\simeq\Mat_n(K)$; explicitly $\A$ may be identified with the subring of $\Mat_n(K)$ consisting of elements of the form
$$
\begin{pmatrix}
x_0 & \gamma\sigma(x_{n-1}) & \gamma\sigma^2(x_{n-2}) &\cdots& \gamma\sigma^{n-1}(x_1)\\
x_1 & \sigma(x_{0}) & \gamma\sigma^2(x_{n-1}) &\cdots& \gamma\sigma^{n-1}(x_2)\\
x_2 & \sigma(x_{1}) & \sigma^2(x_{0}) &\cdots& \gamma\sigma^{n-1}(x_3)\\
\vdots & \vdots&\vdots& &\vdots \\
x_{n-1} & \sigma(x_{n-2}) &\sigma^2(x_{n-3}) &\cdots& \sigma^{n-1}(x_{0})
\end{pmatrix}.
$$

Since $\A_K\simeq\Mat_n(K)$, the central simple algebra $\A$ corresponds to a Galois action on $\Mat_n(K)$, which we now describe explicitly. Fix an $n$-dimensional $K$-vector space $V$ and basis $u_1, \ldots, u_n$ of $V$ to identify $\End(V) = \Mat_n(K)$. Recall the notation that $e_{ij}$ (or $e_{i,j}$) refers to the $n \times n$ matrix whose only nonzero entry is a $1$ in the $(i,j)$-th position. For all $\alpha\in K$, the action of $\Gal(K/F)=\langle\sigma\rangle$ is given by
\[
\sigma(\alpha e_{ij}):=
\begin{cases} 
      \sigma(\alpha) e_{i+1,j+1}, & i,j<n \\
      \sigma(\alpha) e_{11}, & i=j=n\\
      \sigma(\alpha) \gamma e_{1,j+1}, & i=n,j<n\\
      \sigma(\alpha) \gamma^{-1} e_{i+1,1}, & i<n,j=n
   \end{cases}
\]
In other words, $\sigma$ acts as usual on $K$, and it adds 1 to both of the $i$ and $j$ indices, multiplying by $\gamma$ or $\gamma^{-1}$ whenever the $i$ or $j$ index, respectively, overflows. Written in matrix form, we have
\[
\sigma\colon(\alpha_{ij})_{i,j}\longmapsto
\left(
\begin{array}{ccccc}
\sigma(\alpha_{nn}) & \gamma\sigma(\alpha_{n1}) & \gamma\sigma(\alpha_{n2}) &\cdots& \gamma\sigma(\alpha_{n,n-1})\\
\gamma^{-1}\sigma(\alpha_{1n}) & \sigma(\alpha_{11}) & \sigma(\alpha_{21}) &\cdots& \sigma(\alpha_{2,n-1})\\
\gamma^{-1}\sigma(\alpha_{2n}) & \sigma(\alpha_{21}) & \sigma(\alpha_{22}) &\cdots& \sigma(\alpha_{3,n-1})\\
\vdots & \vdots&\vdots& &\vdots \\
\gamma^{-1}\sigma(\alpha_{n-1,n}) & \sigma(\alpha_{n-1,1}) &\sigma(\alpha_{n-1,2}) &\cdots& \sigma(\alpha_{n-1,n-1})
\end{array}
\right)
\]

The induced action on $V^{\otimes n}\otimes \bigwedge^n V^*=G(\A_K/K)$ given by Proposition \ref{l:galois-action-on-galois-closure-of-EndV} is as follows: let $u_{n+1}:=u_1$ and $\omega=u_1^*\wedge\dots\wedge u_n^*$. Then for $1\leq i_j\leq n$, we have 
\[
\sigma\colon \alpha u_{i_1}\otimes\dots\otimes u_{i_n}\otimes\omega\,\longmapsto\, \sigma(\alpha)\gamma^{r-1}(-1)^{n-1} u_{i_1+1}\otimes\dots\otimes u_{i_n+1}\otimes\omega
\]
where $r$ is the number of $j$ such that $i_j=n$.

\begin{example}
Let $F/\QQ$ be a field that does not contain a square root of $-1$ and let $K=F(i)$. Then $\sigma(i)=-i$ and let $\overline\alpha := \sigma(\alpha)$. If $\gamma\in F^*$ is not of the form $a^2+b^2$ for $a,b\in F$, then $\A(K/F,\sigma,\gamma)$ is a division algebra, whose elements are explicitly represented by matrices of the form
\[
\left(
\begin{array}{cc}
x_0 & \gamma \overline x_1\\
x_1 & \overline x_0
\end{array}
\right)
\]
with $x_i\in K$. For example, when $F=\RR$ and $\gamma=-1$, this gives the usual matrix representation of the Hamiltonian quaternions $\mathbb{H}$.

By Proposition \ref{l:galois-action-on-galois-closure-of-EndV}, the action of $\Gal(K/F)=\<\sigma\>$ on $\Mat_2(K)$ corresponding to $G(\A/F)$ is given by
\[
\begin{split}
&\sigma((\alpha_{11}u_1\otimes u_1+\alpha_{12}u_1\otimes u_2+\alpha_{21}u_2\otimes u_1+\alpha_{22}u_2\otimes u_2)\otimes\omega) \\
=&-(\gamma^{-1}\overline\alpha_{11}u_2\otimes u_2+\overline\alpha_{12}u_2\otimes u_1+\overline\alpha_{21}u_1\otimes u_2+\gamma\overline\alpha_{22}u_1\otimes u_1)\otimes\omega.
\end{split}
\]
Since $G(\A/F)$ is the subspace of elements which are fixed by $\sigma$, it is explicitly given by the set of 
\[
(\alpha u_1\otimes u_1-\gamma\overline\alpha u_2\otimes u_2+\beta u_1\otimes u_2-\overline\beta u_2\otimes u_1)\otimes\omega
\]
with $\alpha,\beta\in K$. On the other hand, $\A$ is a quadratic algebra over $F$, so we know from Proposition \ref{prop:quadratic} that $G(\A/F)\simeq\A$, where the left $(\A^{\otimes 2}*S_2)$-module structure on $\A$ is described in \S\ref{subsec:quadratic algebras}. The map
\[
\left(
\begin{array}{cc}
\beta & \gamma\overline\alpha\\
\alpha & \overline\beta
\end{array}
\right)
\,\longmapsto\,
(\alpha u_1\otimes u_1-\gamma\overline\alpha u_2\otimes u_2+\beta u_1\otimes u_2-\overline\beta u_2\otimes u_1)\otimes\omega
\]
yields an explicit isomorphism between our two different descriptions of $G(\A/F)$ as a left $(\A^{\otimes 2}*S_2)$-module.
\end{example}

\subsection{Group rings}
An easy application of the product formula (Theorem \ref{thm:product-formula}) and our work in Sections \ref{subsec:endo-rings} and \ref{subsec:csa} 
allows us to compute Galois closures of group rings. For a finite group $G$ and a field $F$ with characteristic prime to $|G|$, Maschke's theorem implies that the group ring $F[G]$ is semisimple, that is, has the structure of a product of central simple algebras (and thus is a degree $n$ algebra for some integer $n$). One thus can compute the Galois closure of such a group ring. If all $G$-representations are split, the computation is easier:

\begin{proposition}
\label{prop:gp-algebras}
Let $G$ be a finite group and $F$ a field whose characteristic is prime to $|G|$. If all $G$-representations are split over $F$, then
\[
G(F[G]/F)\simeq(\bigotimes_\rho (V_\rho^{\otimes n_\rho} \otimes \extp^{n_\rho} V^*_\rho))^N
\]
where the tensor product varies over the irreducible representations $\rho\colon G\to V_\rho$, $n_\rho:=\dim V_\rho$, and $N$ is the multinomial coefficient $N:=|G|! / \prod_\rho n_\rho!$.
\end{proposition}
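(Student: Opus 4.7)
The plan is to stack three results that have already been proved: Maschke's theorem, the product formula (Theorem \ref{thm:product-formula}), and the Galois-closure computation for endomorphism rings (Theorem \ref{thm:gcforEndV}). Nothing genuinely new is required.

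First I would invoke Maschke's theorem. Since $\mathrm{char}(F)$ is prime to $|G|$ and every irreducible $G$-representation is split over $F$, the Artin--Wedderburn decomposition yields an isomorphism of $F$-algebras
\[
F[G] \;\simeq\; \prod_\rho \End(V_\rho),
\]
where $\rho$ ranges over the isomorphism classes of irreducible $G$-representations. Equip each factor $\End(V_\rho)$ with its natural degree $n_\rho$ structure via Example \ref{ex:matrix-alg-polyn}, so that Definition \ref{def:products of degree n Ralgs} upgrades the right-hand side to a degree-$\sum_\rho n_\rho$ $F$-algebra, and hence furnishes $F[G]$ with such a degree structure via the Maschke isomorphism.

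Next, I would apply the product formula (Theorem \ref{thm:product-formula}) to this decomposition, obtaining
\[
G(F[G]/F) \;\simeq\; \Bigl(\bigotimes_\rho G(\End(V_\rho)/F)\Bigr)^{N},
\]
with $N$ the stated multinomial coefficient. Finally, I would invoke Theorem \ref{thm:gcforEndV}, which identifies $G(\End(V_\rho)/F) \simeq V_\rho^{\otimes n_\rho} \otimes \extp^{n_\rho} V_\rho^*$ as left $(\End(V_\rho)^{\otimes n_\rho}\ast S_{n_\rho})$-modules. Substituting gives the claimed formula, and the $(A^{\otimes n}\ast S_n)$-compatibility already built into each ingredient ensures the composite isomorphism respects all relevant structure (the $S_n$-induction from $\prod_\rho S_{n_\rho}$ is exactly what the product formula provides).

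The only real obstacle is bookkeeping: one must verify that the degree structure on $F[G]$ inherited from the product of natural degree-$n_\rho$ structures on the $\End(V_\rho)$ is the one intended in the statement, and that the multinomial coefficient is correctly matched (the product formula produces $\binom{\sum n_\rho}{n_{\rho_1},\ldots,n_{\rho_k}}$, so one should read the $N$ in the statement accordingly). No computation beyond tracking indices is needed.
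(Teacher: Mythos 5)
Your proposal matches the paper's proof exactly: Maschke plus Artin--Wedderburn gives $F[G]\simeq\prod_\rho\End(V_\rho)$, then Theorem \ref{thm:product-formula} and Theorem \ref{thm:gcforEndV} finish it. Your parenthetical caution about reading $N$ as the multinomial coefficient $\binom{\sum_\rho n_\rho}{n_{\rho_1},\ldots,n_{\rho_k}}$ produced by the product formula is exactly the right bookkeeping.
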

\begin{proof}
By Maschke's theorem and Artin-Wedderburn (see, e.g., \cite[Chapter 6]{serre-finitegps}), one has $F[G] \simeq \prod_\rho \End(V_\rho)$. The result then follows from Theorem \ref{thm:product-formula} and the computation of $G(\End(V_\rho)/F)$ in Theorem \ref{thm:gcforEndV}.
\end{proof}

\section{Hermitian representations from Galois closures} \label{sec:Hermitian}

As indicated in the introduction, one of our motivations for studying non-commutative Galois closures is related to constructing ``Hermitian'' representations.  We describe in \S \ref{sec:Hermcubes} one way to construct such representations and how Galois closures are needed.

\subsection{Definitions} \label{sec:Hermcubes}
We would like to study representations of algebraic groups that generalize tensor products of standard representations.

A simple explicit example is the space $W$ of $m \times m$ matrices that are {\em Hermitian} with respect to a quadratic algebra $A$ over $R$.  Using the notation for quadratic algebras from \S \ref{sec:exquadalgs}, these are $m \times m$ matrices $C = (c_{ij})$ such that $\overline{C} = C^t$, where $\overline{C}$ denotes the entrywise conjugate of $C$, i.e., $c_{ij} = \overline{c_{ji}}$ for all $1 \leq i,j \leq m$.  If $A$ is a rank $\rho$ free module over $R$, then $W$ is a free module over $R$ of rank $m + \rho m(m-1)/2$.  Moreover, the group $\GL_m(A)$ naturally acts on $W$: for $\gamma \in \GL_m(A)$, we define the action of $\gamma$ by $\gamma \cdot C = \gamma C \overline{\gamma}$ where $\overline{\gamma}$ is the entrywise conjugate of $\gamma$. For example, if $A$ is simply $R$ itself considered as a degree $2$ algebra with polynomial $P_r(T) = (T-r)^2$, then we have recovered the space of symmetric $m \times m$ matrices over $R$ with the standard action of $\GL_m(R)$; for $A = R^2$ as a quadratic algebra, this space is isomorphic to the space of $m \times m$ matrices over $R$ with the standard action of $\GL_m(R) \times \GL_m(R)$. It is of course possible to define $W$ in a basis-free manner, as we will see in more generality below.

We wish to generalize the above to a notion of a Hermitian $n$-dimensional $m \times \cdots \times m$ array with entries in a degree $n$ algebra $A$ over $R$.  Intuitively, we would like the symmetric group $S_n$ to act on such an array in two different ways: by an $S_n$-action on $A$ and by permuting the factors, and we would like to restrict to the arrays for which these two actions agree.  In general, an algebra $A$ does not come equipped with a natural $S_n$-action, but its Galois closure $G(A/R)$ does. So instead we allow for entries in $G(A/R)$. We now make this definition precise.

We begin with a coordinate-free description of the representation and then give an explicit description in terms of coordinates. Let $U$ be a free $R$-module of rank $m$. There are two natural $S_n$-actions on $G(A/R)\otimes_R U^{\otimes n}$, one is the left action on $G(A/R)$ and the other is a right action on $U^{\otimes n}$ given by permuting coordinates. For $\sigma \in S_n$ and $\aleph \in G(A/R) \otimes_R U^{\otimes n}$, we denote the two actions by $\sigma \cdot_1 \aleph$ and $\sigma \cdot_2 \aleph$, respectively.

\begin{definition}
\label{def:hermitian}
We define the \emph{Hermitian space} $\HH_{A,U}$ to be the subspace of $G(A/R)\otimes_R U^{\otimes n}$ where the two $S_n$-actions agree (up to an inverse), i.e.,
$$\{ \aleph \in G(A/R) \otimes_R U^{\otimes n} : \sigma \cdot_1 \aleph = \sigma^{-1} \cdot_2 \aleph \textrm{ for all } \sigma \in S_n \}.$$
\end{definition}

\begin{remark}
\label{rmk:Hermitianization as space of invariants}
We have two commuting (left) $S_n$-actions on $G(A/R)\otimes U^{\otimes n}$, hence an action of $S_n\times S_n$. The Hermitianization $\HH_{A,U}$ is the space of invariants for the diagonally embedded copy of $S_n$ in $S_n\times S_n$, i.e., $\HH_{A,U}=(G(A/R)\otimes U^{\otimes n})^{S_n}$.
\end{remark}

We have a natural action of $A \otimes_R \End(U)$ on $G(A/R) \otimes_R U^{\otimes n}$, where $A$ acts on $G(A/R)$ via the diagonal embedding of $A$ in $A^{\otimes n}$ and $\End(U)$ acts on each factor of $U^{\otimes n}$ in the standard way.

\begin{lemma}
\label{l:Herm-rep}
The action of $A\otimes_R \End(U)$ on $G(A/R)\otimes_R U^{\otimes n}$ commutes with the two $S_n$-actions on $G(A/R) \otimes_R U^{\otimes n}$. In particular, $\HH_{A,U}$ is preserved by the action of $A\otimes_R \End(U)$.
\end{lemma}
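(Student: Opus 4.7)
The plan is to verify the two commutations separately—the $A$-action commutes with both $S_n$-actions, and the $\End(U)$-action commutes with both $S_n$-actions—from which both conclusions of the lemma follow immediately.

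First I would handle the action of $A$ on $G(A/R)\otimes_R U^{\otimes n}$. This action only touches the first tensor factor, and the second $S_n$-action permutes only the $U^{\otimes n}$ factors, so commutation with the second $S_n$-action is immediate. For the first $S_n$-action, the key observation is that the diagonal embedding $A\hookrightarrow A^{\otimes n}$, $a\mapsto a\otimes\cdots\otimes a$, has image in the $S_n$-invariants of $A^{\otimes n}$. Using the defining relation $\sigma(a\cdot_i b)=a\cdot_{\sigma(i)}\sigma(b)$ given just before the lemma, an easy induction on $n$ gives
\[
\sigma\bigl(a\cdot_1 a\cdot_2\cdots a\cdot_n b\bigr) \;=\; a\cdot_{\sigma(1)}\cdots a\cdot_{\sigma(n)} \sigma(b) \;=\; a\cdot_1\cdots a\cdot_n\sigma(b),
\]
where the last equality uses that $\{\sigma(1),\ldots,\sigma(n)\}=\{1,\ldots,n\}$ and that the operators $a\cdot_i$ and $a\cdot_j$ commute (they come from multiplication in distinct tensor factors of $A^{\otimes n}$). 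This proves the diagonal $A$-action commutes with the first $S_n$-action.

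Next I would handle $\End(U)$. Here the action only touches the $U^{\otimes n}$ factor, so commutation with the first $S_n$-action (which only touches $G(A/R)$) is immediate. Commutation with the second $S_n$-action is the standard fact that the diagonal $\End(U)$-action on $U^{\otimes n}$ commutes with the permutation action of $S_n$: for $\alpha\in\End(U)$, $\sigma\in S_n$, and $u_1\otimes\cdots\otimes u_n\in U^{\otimes n}$, both $\sigma\cdot_2(\alpha\otimes\cdots\otimes\alpha)(u_1\otimes\cdots\otimes u_n)$ and $(\alpha\otimes\cdots\otimes\alpha)(\sigma\cdot_2(u_1\otimes\cdots\otimes u_n))$ equal $\alpha(u_{\sigma^{-1}(1)})\otimes\cdots\otimes\alpha(u_{\sigma^{-1}(n)})$. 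Combining the two steps, any element of $A\otimes_R\End(U)$, acting as the product of a diagonal $A$-action and a diagonal $\End(U)$-action, commutes with both $S_n$-actions.

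The ``in particular'' then follows at once: if $\aleph\in\HH_{A,U}$ and $g\in A\otimes_R\End(U)$, then for each $\sigma\in S_n$,
\[
\sigma\cdot_1(g\aleph)=g(\sigma\cdot_1\aleph)=g(\sigma^{-1}\cdot_2\aleph)=\sigma^{-1}\cdot_2(g\aleph),
\]
so $g\aleph\in\HH_{A,U}$. I do not anticipate any real obstacle; the only care needed is in writing out the equivariance relation $\sigma(a\cdot_i b)=a\cdot_{\sigma(i)}\sigma(b)$ cleanly enough to see the cancellation in the diagonal case.
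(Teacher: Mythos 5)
Your proof is correct and follows essentially the same approach as the paper, which simply observes that the $A\otimes_R\End(U)$-action is the same on every factor $A$ of $G(A/R)$ and every factor $U$ of $U^{\otimes n}$; you have just written out this diagonal-action argument explicitly. The details you supply (the equivariance relation $\sigma(a\cdot_i b)=a\cdot_{\sigma(i)}\sigma(b)$, commutation of the operators $a\cdot_i$, and the standard commutation of the diagonal $\End(U)$-action with the permutation action) are all accurate.
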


\begin{proof}
This follows from the $A \otimes_R \End(U)$-action being the same on every factor $A$ of $G(A/R)$ and every factor $U$ of $U^{\otimes n}$ by definition. 
\end{proof}

\begin{definition}
We refer to $\HH_{A,U}$ with this action of $A\otimes\End(U)$ as a \emph{Hermitian representation}.
\end{definition}

We next describe the Hermitian representation explicitly using coordinates. After choosing a basis for $U$, we may make identifications $U\simeq R^{\oplus m}$ and $A\otimes_R \End(U)\simeq \Mat_m(A)$. Letting $T=\{(i_1,\ldots,i_n)\mid 1\leq i_j\leq m\}$, we then have
\[
G(A/R)\otimes_R U^{\otimes n}\simeq G(A/R)^{\oplus T},
\]
i.e., elements of $G(A/R) \otimes U^{\otimes n}$ are represented as $n$-dimensional $m \times \cdots \times m$ arrays with entries in $G(A/R)$, where $T$ parametrizes the coordinates of the array. The two $S_n$-actions can then by described as follows. The first $S_n$-action on $G(A/R)$ acts on $M\in G(A/R)^{\oplus T}$ coordinate-wise: $\sigma(M_t)_{t\in T}=(\sigma(M_t))_{t\in T}$. The second $S_n$-action on $U^{\otimes n}$ yields an action on $T$ via $\sigma(i_1,\ldots,i_n):=(i_{\sigma(1)},\ldots,i_{\sigma(n)})$, which then induces an action on $G(A/R)^{\oplus T}$. The Hermitian space $\HH_{A,U}$ is then the subspace of arrays where these two actions agree (up to an inverse).

In coordinates, the action of $A \otimes \End(U)$ on $\HH_{A,U}$ is described as follows. Let $\gamma\in A\otimes_R \End(U)\simeq\Mat_m(A)$ and $M = (M_t)_{t \in T} \in \HH_{A,U}$. Then $\gamma\cdot M$ has $(i_1, \ldots, i_n)$-entry as follows:
\[
(\gamma\cdot M)_{i_1\ldots i_n}=\sum_{j_1, \ldots, j_n}(\gamma_{i_1j_1}\otimes\gamma_{i_2j_2}\otimes\cdots\otimes\gamma_{i_nj_n})\cdot M_{j_1\ldots j_n};
\]
here $(\gamma_{i_1j_1}\otimes\gamma_{i_2j_2}\otimes\cdots\otimes\gamma_{i_nj_n})\cdot M_{j_1\ldots j_n}$ comes from the left action of $A^{\otimes n}$ on $G(A/R)$.

\subsection{Product formula for Hermitian representations}
In this subsection we show that if $A$ is a product of degree $n_i$ algebras, then the Hermitian representation associated to $A$ is a tensor product of the corresponding Hermitian representations.

\begin{theorem}[Product formula for Hermitianizations]
\label{thm:prod-form-Herm}
For $1\leq i\leq\ell$, let $A_i$ be a degree $n_i$ $R$-algebra. If $A=A_1\times\cdots\times A_\ell$, then $\HH_{A,U}\simeq\HH_{A_1,U}\otimes_R \cdots\otimes_R \HH_{A_\ell,U}$.
\end{theorem}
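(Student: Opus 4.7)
The plan is to combine the product formula for Galois closures (Theorem \ref{thm:product-formula}) with the standard \emph{projection formula} for induced representations of finite groups, and then take $S_n$-invariants in two different ways.

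Let $H := S_{n_1}\times\cdots\times S_{n_\ell}$, viewed as a subgroup of $S_n$ via the block embedding, and set $W := G(A_1/R)\otimes\cdots\otimes G(A_\ell/R)$. By Theorem \ref{thm:product-formula},
\[
G(A/R)\;\simeq\;\Ind_{H}^{S_n} W
\]
as a left module over $A^{\otimes n}*S_n$, where $A^{\otimes n}\simeq\prod_{\underline{i}\in[\ell]^n}A_{\underline{i}}$ acts on the summand $\sigma\otimes W$ via the obvious action of $A_{(\sigma(1),\dots,\sigma(n))}\simeq A_{i_{\sigma^{-1}(1)}}\otimes\cdots\otimes A_{i_{\sigma^{-1}(n)}}$ (with indices grouped in blocks) through the projections $A\to A_i$.

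Next I would apply the projection formula: for any $S_n$-module $M$, one has a canonical isomorphism $(\Ind_H^{S_n}W)\otimes_R M\simeq \Ind_H^{S_n}(W\otimes_R \Res_H^{S_n} M)$. Taking $M=U^{\otimes n}$ and observing that, restricted to the block subgroup $H$, there is an identification $\Res_H^{S_n}(U^{\otimes n})\simeq U^{\otimes n_1}\otimes\cdots\otimes U^{\otimes n_\ell}$ as an $H$-representation (each $S_{n_j}$ permuting its own block of factors), I obtain
\[
G(A/R)\otimes_R U^{\otimes n}\;\simeq\;\Ind_H^{S_n}\!\Bigl(\bigotimes_{j=1}^\ell \bigl(G(A_j/R)\otimes_R U^{\otimes n_j}\bigr)\Bigr),
\]
and this identification is $\Mat_m(A)$-equivariant because $\Mat_m(A)\simeq\prod_j\Mat_m(A_j)$ and the $A_j\otimes\End(U)$-action goes through the $H$-equivariant factor on the right.

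Now I would take diagonal $S_n$-invariants on both sides using Remark \ref{rmk:Hermitianization as space of invariants}. The elementary fact $(\Ind_H^{S_n}V)^{S_n}\simeq V^H$ for a finite group (verified by observing that a $G$-invariant in $\bigoplus_{gH}g\otimes V$ is determined by a single $H$-invariant vector in the identity coset) reduces the computation to
\[
\HH_{A,U}\;\simeq\;\Bigl(\bigotimes_{j=1}^\ell \bigl(G(A_j/R)\otimes_R U^{\otimes n_j}\bigr)\Bigr)^{S_{n_1}\times\cdots\times S_{n_\ell}}\!.
\]
Since the tensor product of representations of distinct groups has invariants equal to the tensor product of invariants, the right-hand side equals $\bigotimes_{j=1}^\ell \bigl(G(A_j/R)\otimes_R U^{\otimes n_j}\bigr)^{S_{n_j}}=\bigotimes_{j=1}^\ell \HH_{A_j,U}$, giving the desired isomorphism.

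The main obstacle is bookkeeping rather than genuine difficulty: I need to verify carefully that the $A^{\otimes n}$-action on the induced description of $G(A/R)$ lines up with the $\prod_j A_j^{\otimes n_j}$-action on the blocked tensor product after passing through the projection formula, so that the final isomorphism is equivariant not only for $S_n$ but also for the diagonally embedded $A\subset A^{\otimes n}$ and for $\End(U)\subset\End(U)^{\otimes n}$ (i.e.\ for the full $\Mat_m(A)$). Once the block-structured description of the $A^{\otimes n}\!\ast S_n$-action on $\Ind_H^{S_n}W$ from Theorem \ref{thm:product-formula} is in hand, this verification is straightforward on pure tensors.
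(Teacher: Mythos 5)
Your proposal is correct and follows essentially the same route as the paper: the product formula for Galois closures written as an induced module, the projection formula $\Ind_H^{S_n}(W)\otimes M\simeq\Ind_H^{S_n}(W\otimes\Res^{S_n}_H M)$, and passage to (diagonal) $S_n$-invariants via $(\Ind_H^{S_n}V)^{S_n}\simeq V^H$, with the invariants then factoring over the blocks. The only cosmetic differences are that the paper reduces to $\ell=2$ for notation and leaves the $\Mat_m(A)$-equivariance bookkeeping implicit.
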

\begin{proof}
For notational simplicity, we reduce to the case $\ell = 2$. Recall from Theorem \ref{thm:product-formula} that if $A_1$ and $A_2$ are $R$-algebras of degree $n_1$ and $n_2$, respectively, then the $R$-algebra $A = A_1 \times A_2$ has degree $n = n_1 + n_2$ and its Galois closure is given by
\[
G(A/R)=\Ind_{S_{n_1}\times S_{n_2}}^{S_n}(G(A_1/R) \otimes G(A_2/R)).
\]
Further recall from Remark \ref{rmk:Hermitianization as space of invariants} that $\HH_{A,U}=(G(A/R)\otimes U^{\otimes n})^{S_n}$ for the diagonally embedded copy of $S_n$.

Now, in complete generality, if $G$ is a finite group, $H\subseteq G$ is a subgroup, $V$ is a finite-dimensional $H$-representation, and $W$ is a finite-dimensional $G$-representation, then by the projection formula (see, e.g., \cite[\href{https://stacks.math.columbia.edu/tag/01E6}{Tag 01E6}]{stacks-project}) we have
\[
\Ind_H^G(V)\otimes W\simeq\Ind_H^G(V\otimes\Res^G_H(W)).
\]
Taking $G$-invariants of both sides yields
\[
(\Ind_H^G(V)\otimes W)^G\simeq(V\otimes\Res^G_H(W))^H.
\]
Applying this to the case where $G=S_n$, $H=S_{n_1}\times S_{n_2}$, $V=G(A_1/R)\otimes G(A_2/R)$, and $W=U^{\otimes n}$, we see
\[
\HH_{A,U}=(G(A/R)\otimes U^{\otimes n})^{S_n}\simeq (G(A_1/R)\otimes G(A_2/R)\otimes U^{\otimes n_1}\otimes U^{\otimes n_2})^{S_{n_1}\times S_{n_2}}\simeq \HH_{A_1,U}\otimes\HH_{A_2,U}
\]
thereby proving the result.
\end{proof}

\begin{example}
Viewing $R$ as a degree $1$ algebra over itself, we have $\HH_{R,U}=U$. The Hermitian representation is given by the standard action of $\GL(U)$ on $U$. Applying the product formula (Theorem \ref{thm:prod-form-Herm}) tells us that for the degree $n$ algebra $A = R^n$, the Hermitian representation is $\HH_{A,U}=U^{\otimes n}$ equipped with the natural action of $\GL(U)^n$.
\end{example}

\begin{example}
\label{ex:mxmxm as m tuple of mxm}
If $B$ is a quadratic $R$-algebra and $A$ is the cubic algebra $R \times B$ as in Section \ref{sec:cubicfromsmaller}, we conclude that $\HH_{A,U} \simeq U \otimes \HH_{B,U}$. With a choice of basis for the rank $m$ free $R$-module $U$, we see that $m \times m \times m$ arrays that are Hermitian with respect to $A = R \times B$ may be viewed as an $m$-tuple of $m \times m$ matrices that are Hermitian with respect to $B$. More generally, for a degree $n$ $R$-algebra $B$, elements of the Hermitian space $\HH_{R \times B,U}$ may be viewed as an $m$-tuple of elements of $\HH_{B,U}$.
\end{example}

\subsection{Example: endomorphism algebras (or matrix rings)}

Let $V$ be a rank $n$ free $R$-module and let $A=\End(V)$.  We study the Hermitian representation $\HH_{A,U}$, where $U$ is a rank $m$ free $R$-module.

\begin{proposition}
\label{prop:WforEndV}
Let $U$ be a rank $m$ free $R$-module, $V$ be a rank $n$ free $R$-module, and $A=\End(V)$.
The Hermitian representation $\HH_{A,U}$ is isomorphic to $\extp^n(V\otimes U)\otimes\extp^n(V^*)$.
\end{proposition}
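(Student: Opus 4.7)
The plan is to compute $\HH_{A,U}$ directly by substituting in the explicit description of $G(A/R)$ obtained in Theorem \ref{thm:gcforEndV}. That result gives an $(A^{\otimes n}\ast S_n)$-equivariant isomorphism $G(A/R) \simeq V^{\otimes n}\otimes_R \extp^n(V^*)$ under which $\sigma \in S_n$ acts by permutation on the $n$ tensor factors of $V^{\otimes n}$ together with multiplication by $\sgn(\sigma)$ on $\extp^n(V^*)$. Combined with Remark \ref{rmk:Hermitianization as space of invariants}, this yields
\[
\HH_{A,U} \;\simeq\; \bigl(V^{\otimes n}\otimes_R U^{\otimes n}\otimes_R \extp^n(V^*)\bigr)^{S_n},
\]
where the diagonal $S_n$ acts simultaneously by permutation on each of $V^{\otimes n}$ and $U^{\otimes n}$ and by the sign character on $\extp^n(V^*)$.

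Next, I would apply the natural shuffle isomorphism $V^{\otimes n}\otimes_R U^{\otimes n}\simeq (V\otimes_R U)^{\otimes n}$, which is $S_n$-equivariant when the right-hand side carries the standard permutation action. This reduces the computation to
\[
\HH_{A,U} \;\simeq\; \bigl((V\otimes_R U)^{\otimes n}\otimes_R \extp^n(V^*)\bigr)^{S_n}.
\]
Since $\extp^n(V^*)$ is free of rank one and $S_n$ acts on it only through the sign character, these invariants may be factored as $\mathcal{M}\otimes_R \extp^n(V^*)$, where $\mathcal{M}$ denotes the anti-symmetric ($\sgn$-isotypic) subspace of $(V\otimes U)^{\otimes n}$ for the permutation action. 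The final step is to identify $\mathcal{M}$ with $\extp^n(V\otimes U)$; the natural candidate is the antisymmetrization map
\[
\alpha\colon \extp^n(V\otimes U)\longrightarrow (V\otimes U)^{\otimes n},\qquad w_1\wedge\cdots\wedge w_n\longmapsto \sum_{\sigma\in S_n}\sgn(\sigma)\,w_{\sigma(1)}\otimes\cdots\otimes w_{\sigma(n)},
\]
whose image manifestly lies in $\mathcal{M}$.

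The main obstacle is verifying that $\alpha$ yields an isomorphism onto $\mathcal{M}$. I would handle this by choosing $R$-bases for $V$ and $U$ to produce an $R$-basis of $V\otimes U$ of size $nm$ indexed by $[n]\times[m]$, observing that $\extp^n(V\otimes U)$ is then free of rank $\binom{nm}{n}$ with basis indexed by $n$-element subsets, and then checking by a direct basis-level computation, analogous to the generation argument at the end of the proof of Theorem \ref{thm:gcforEndV}, that the antisymmetrized images of these basis elements form an $R$-basis of $\mathcal{M}$. Assembling the pieces gives the claimed isomorphism $\HH_{A,U}\simeq \extp^n(V\otimes U)\otimes_R \extp^n(V^*)$.
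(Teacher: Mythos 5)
Your argument follows the same route as the paper's proof: both use Remark \ref{rmk:Hermitianization as space of invariants} to write $\HH_{A,U}$ as the diagonal $S_n$-invariants, substitute $G(A/R)\simeq V^{\otimes n}\otimes\extp^n(V^*)$ from Theorem \ref{thm:gcforEndV}, pass through the shuffle isomorphism $V^{\otimes n}\otimes U^{\otimes n}\simeq (V\otimes U)^{\otimes n}$, and identify the resulting sign-isotypic piece of $(V\otimes U)^{\otimes n}$ with $\extp^n(V\otimes U)$. The paper performs this last identification in a single sentence; you expand it into the claim that the antisymmetrization map $\alpha$ is an isomorphism onto the submodule $\mathcal{M}$ of antisymmetric tensors, verified on bases.

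That expanded claim is precisely where care is needed in the stated generality of an arbitrary commutative base ring $R$: if $2$ is a zero divisor in $R$, the basis-level check you describe does not go through. For example, over $R=\mathbb{F}_2$ the sign character is trivial, so $\mathcal{M}$ is the full module of symmetric tensors $\Gamma^n(V\otimes U)$, of rank $\binom{nm+n-1}{n}$, while the image of $\alpha$ has rank $\binom{nm}{n}$; already for $n=2$, $m=1$ the tensor $w\otimes w$ (for $w$ a basis vector) lies in $\mathcal{M}$ but not in the image of $\alpha$. Your verification is fine whenever $2$ is a nonzerodivisor in $R$: coefficients attached to tuples with a repeated index are forced to satisfy $2a=0$, hence vanish, and each orbit of tuples with distinct indices contributes exactly the antisymmetrized basis element, so $\alpha$ does map a basis of $\extp^n(V\otimes U)$ onto a basis of $\mathcal{M}$. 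So either add such a hypothesis on $R$ or reinterpret the sign-isotypic piece; note that this is not a defect of your write-up relative to the paper, since the paper's phrase ``the copy of the sign representation in $(V\otimes U)^{\otimes n}$'' makes the same silent identification.
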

\begin{proof}
As mentioned in Remark \ref{rmk:Hermitianization as space of invariants}, we may view $\HH_{A,U}$ as the invariant subspace of a particular $S_n$-action.
Recall from Theorem \ref{thm:gcforEndV} that $G(A/R) \simeq V^{\otimes n}\otimes\extp^n(V^*)$. Since $\extp^n(V^*)$ is the sign representation, our desired subspace of $G(A/R) \otimes U^{\otimes n}$ is therefore $\extp^n(V^*)$ tensored with the copy of the sign representation in $V^{\otimes n}\otimes U^{\otimes n}=(V\otimes U)^{\otimes n}$. This is given by $\extp^n(V\otimes U)\otimes\extp^n(V^*)$.

The action of $A \otimes \End(U) = \End(V) \otimes \End(U)$ on $\HH_{A,U} \simeq \extp^n(V\otimes U)\otimes\extp^n(V^*)$ is then given by $\End(U)$ acting on $U$ and $\End(V)$ acting on both $V$ and $V^*$.
\end{proof}

\begin{example}
\label{ex:examples coming from Hermitianization of matrix algebras}
If $U$ and $V$ are free $R$-modules of ranks $m$ and $n$, respectively, then with a choice of basis for each, we observe that $\HH_{\End(V),U}$ is naturally isomorphic to (a twist of) the $n$-th wedge product of a free $R$-module of rank $mn$, with the standard action of $\GL_{mn}(R)$. In particular, we obtain some interesting representations (see \S \ref{sec:vinberg}):
\begin{equation}
\renewcommand{\arraystretch}{1.5}
\begin{array}{cccc}
\dim U &  \dim V & \HH_{\End(V),U} & \textrm{group} \\
\hline
m & 2 & \extp^2(2m) & \GL_{2m} \\
2 & 3 & \extp^3(6) & \GL_6 \\
2 & 4 & \extp^4(8) & \GL_8 \\
3 & 3 & \extp^3(9) & \GL_9 \\
\end{array} 
\end{equation}
Note that the first two cases in the table could have been computed without the definition of a Galois closure. The first may be visualized as Hermitian $m \times m$ matrices over the quadratic algebra $\Mat_2(R)$. The second may be visualized as $2 \times 2 \times 2$ cubes, Hermitian over the cubic algebra $\Mat_3(R)$; the orbits of this space for $R = \ZZ$ are studied in \cite{hcl1}, which motivated much of this paper.
\end{example}

\subsection{Vinberg representations} \label{sec:vinberg}

In \cite{vinberg}, Vinberg considers finite $d$-gradings of Lie algebras $\mathfrak{g} = \sum_{i=0}^{d-1} \mathfrak{g}_i$ and studies the representation of $G_0 \subset G$ on $\mathfrak{g}_1$, where $G_0\subset G$ are the groups corresponding to the Lie algebras $\mathfrak{g}_0 \subset \mathfrak{g}$. The orbit spaces of many of these representations have, in recent years, been studied as moduli spaces of arithmetic or algebraic data \cite{hcl1, jthorne-vinbergAIT, pollack, coregular, rainssam1, rainssam2}.

We observe that many of these representations may be viewed as Hermitian representations, as in Example \ref{ex:examples coming from Hermitianization of matrix algebras}. Below is a table with Vinberg's representations, coming from a $d$-grading on an exceptional group $G$, that also arise as Hermitian representations $\mathcal{H}_{A,U}$ for an $m$-dimensional vector space $U$ over a field $k$ and a degree $n$ $k$-algebra $A$. The last column refers to references where the representation and/or corresponding moduli problem is studied.

\begin{equation*} \def\arraystretch{1.5}
\begin{array}{c|ccccccccc}
 & G & d & \textrm{(semisimple) group} & \textrm{representation} & m & n & A & \textrm{reference}\\
\hline
1. & E_6^{(1)} & 2 & \SL_2 \times \SL_6 & 2 \otimes \extp^3(6) & 2 & 4 & k \times \Mat_{3}(k) & \textrm{\cite[\S 6.6.2]{coregular}}\\
2. & E_6^{(1)} & 3 & \SL_3 \times \SL_3 \times \SL_3 & 3 \otimes 3 \otimes 3 & 3 & 3 & k^3 & \textrm{\cite[\S 4.2]{coregular}}\\
3. & E_7^{(1)} & 2 & \SL_8 & \extp^4(8) & 2 & 4 & \Mat_4(k)& \textrm{\cite{jthorne-vinbergAIT}}\\
4. & E_7^{(1)} & 3 & \SL_3 \times \SL_6 & 3 \otimes \extp^2(6) & 3 & 3 & k \times \Mat_2(k) & \textrm{\cite[\S 5.5]{coregular}}\\
5. & E_8^{(1)} & 2 & \SL_2 \times E_7 & 2 \otimes 56 & 2 & 4 & k \times \mathcal{J} &\textrm{\cite[\S 6.6.3]{coregular}} \\
6. & E_8^{(1)} & 3 & \SL_9 & \extp^3(9) & 3 & 3 & \Mat_3(k) & \textrm{\cite{rainssam1,rainssam2,romanothorne-E8}}\\
7. & E_8^{(1)} & 3 & \SL_3 \times E_6 & 3 \otimes 27 & 3 & 3 & k \times \mathbb{O} &\textrm{\cite[\S 5.4]{coregular}}\\
8. & F_4^{(1)} & 3 & \SL_3 \times \SL_3 & 3 \otimes \Sym^2(3) & 3 & 3 & k \times k_{[2]} & \textrm{\cite[\S 5.2.1]{coregular}}\\
9.& G_2^{(1)} & 2 & \SL_2 \times \SL_2 & 2 \otimes \Sym^3(2) & 2 & 4 & k \times k_{[3]} & \textrm{\cite[\S 6.3.2]{coregular}}\\
10. &D_4^{(3)} & 3 & \SL_3 & \Sym^3(3) & 3 & 3 & k_{[3]} & \textrm{\cite[\S 5.2.2]{coregular}}\\
\end{array}
\end{equation*}

We use the notation $k_{[n]}$ to denote $k$ as a degree $n$ algebra (see Example \ref{ex:triv-generic-polyn}). The notation $\mathbb{O}$ refers to the split octonion algebra over $k$, and $\mathcal{J}$ is the exceptional cubic Jordan algebra (also known as the space of Hermitian $3 \times 3$ matrices with respect to $\mathbb{O}$). In both of these cases, this algebra $A$ is non-associative, but the Galois closure is not needed to describe the Hermitian space (see \cite{coregular} for details).

There are in fact many additional cases of representations that arise from Vinberg's construction and are closely related to Hermitian representations. A simple example is that of $\SL_2 \times \SL_4$ acting on $2 \otimes \Sym^2(4)$, which comes from a $4$-grading of $E_6^{(2)}$; it is the tensor product of a $2$-dimensional space and the space of Hermitian matrices over $k_{[2]}$. However, there are still numerous Vinberg representations for which we do not yet have such an interpretation.

Due to the non-associativity of general cubic Jordan algebras, our current definition of Galois closure does not apply. However, given the above connection between Vinberg representations and Galois closures, we ask:

\begin{question}
Can our definition of Galois closure be extended to the case of cubic Jordan algebras (or other non-associative algebras)? If so, can any of the remaining Vinberg representations be recovered as Hermitian representations associated to such algebras?
\end{question}

\begin{remark}
As mentioned in the introduction, our original motivation for this paper was to study as many representations as possible, including those arising from Vinberg theory, with a uniform method. In particular, we hope it will be possible to study the moduli problems coming from these Hermitian spaces using uniform geometric constructions via Galois closures.
\end{remark}

\newpage
\bibliographystyle{amsalpha}
\bibliography{gcbib}

\end{document}